\newcommand{\R}{\mathbb{R}}
\newcommand{\C}{\mathbb{C}}
\newcommand{\Q}{\mathbb{Q}}
\newcommand{\Z}{\mathbb{Z}}
\newcommand{\N}{\mathbb{N}}
\newcommand{\E}{\mathbb{E}}
\renewcommand{\P}{\mathbb{P}}
\renewcommand{\mod}[1]{\text{ (mod #1)}}
\newcommand{\sse}{\subseteq}
\newcommand{\EC}{\underset{n \in [N]\,}\E}
\newcommand{\ECC}[2]{\underset{#1 \in #2\,}{\E}}
\newcommand{\Elog}{\underset{n \in [N]\,}{\E^{\log}}}
\newcommand{\Elogg}[2]{\underset{#1 \in #2\,}{\E^{\log}}}
\newcommand{\AVG}{\frac{1}{N} \sum_{n=1}^{N}}
\newcommand{\AVGK}{\frac{1}{K} \sum_{k=1}^{K}}
\newtheorem{theorem}{Theorem}[section]
\newtheorem*{theorem*}{Theorem}
\newtheorem{prop}[theorem]{Proposition}
\newtheorem{lemma}[theorem]{Lemma}
\newtheorem{corollary}[theorem]{Corollary}
\newtheorem*{corollary*}{Corollary}
\theoremstyle{definition}
\newtheorem*{definition*}{Definition}
\theoremstyle{remark}
\newtheorem{remark}[theorem]{Remark}
\newtheorem*{remark*}{Remark}
\title{A Dynamical Approach to the Asymptotic Behavior of the Sequence \(\Omega(n)\)}
\author{Kaitlyn Loyd}
\address{Northwestern University\\
        Evanston, IL \\ 
        60208}
\email{\href{mailto:loydka@math.northwestern.edu}{loydka@math.northwestern.edu}}
\date{September 17, 2021}
\thanks{The author was partially supported by NSF grant DMS-1502632.}
\begin{document}
\maketitle

\begin{abstract}
    We study the asymptotic behavior of the sequence \( \{\Omega(n) \}_{ n \in \N } \) from a dynamical point of view, where \( \Omega(n) \) denotes the number of prime factors of \( n \) counted with multiplicity. First, we show that for any non-atomic ergodic system $(X, \mathcal{B}, \mu, T)$, the operators $T^{\Omega(n)}: \mathcal{B} \to L^1(\mu)$ have the strong sweeping-out property. In particular, this implies that the Pointwise Ergodic Theorem does not hold along $\Omega(n)$. Second, we show that the behaviors of $\Omega(n)$ captured by the Prime Number Theorem and Erd\H{o}s-Kac Theorem are disjoint, in the sense that their dynamical correlations tend to zero.  
\end{abstract}

\section{Introduction}
\label{sec: Introduction}
    
    For $n \in \N$, let $\Omega(n)$ denote the number of prime factors of $n$, counted with multiplicity. The study of the asymptotic behavior of \( \Omega(n) \) has a rich history and finds important applications to number theory. For instance, the classical Prime Number Theorem is equivalent to the statement that the set \( \{ n \in \N : \Omega(n) \text{ is even}\} \) has asymptotic density 1/2 \cite{Landau1953, Mangoldt1897}. Recently, a dynamical approach to this question was introduced by Bergelson and Richter \cite{BR2020}. They show that given a uniquely ergodic dynamical system $(X, \mu, T)$, the sequence $\{T^{\Omega(n)} x\}_{n \in \N}$ is uniformly distributed in $X$ for every point $x \in X$ (see \cref{sec: Background} for relevant definitions). The precise statement is as follows:
    
    \begin{theorem}[Theorem A in \cite{BR2020}]
    \label{thm: Bergelson-Richter Theorem}
        Let $(X, \mu, T)$ be uniquely ergodic. Then  
        \[
            \lim_{N \to \infty} \AVG g(T^{\Omega(n)} x) = \int_X g \, d\mu
        \]
        for all $x \in X$ and $g \in C(X)$.
    \end{theorem}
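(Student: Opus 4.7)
The plan is to exploit the complete additivity of $\Omega$---namely $\Omega(pn) = 1 + \Omega(n)$ for every prime $p$---together with unique ergodicity. The decisive identity
\[
g(T^{\Omega(pn)}x) = (g \circ T)(T^{\Omega(n)}x)
\]
will transfer an extra application of $T$ inside the averages at the cost of a summation over primes, and this summation can be absorbed using the Tur\'an--Kubilius inequality.

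Write $A_N^h(x) := \frac{1}{N}\sum_{n=1}^N h(T^{\Omega(n)}x)$. I would fix a parameter $P$ and consider
\[
S_N^P(x) := \sum_{n \leq N}\omega_P(n)\, g(T^{\Omega(n)}x), \qquad \omega_P(n) := \#\{p \leq P : p \mid n\}.
\]
Re-indexing $n = pm$ and invoking the key identity yields $S_N^P(x) = \sum_{p \leq P}\sum_{m \leq N/p}(g \circ T)(T^{\Omega(m)}x)$; meanwhile Tur\'an--Kubilius, which bounds the $L^2$-deviation of $\omega_P$ from its mean $\log\log P + O(1)$, combined with Cauchy--Schwarz gives $S_N^P(x) = (\log\log P)\cdot N\, A_N^g(x) + O(N\|g\|_\infty\sqrt{\log\log P})$. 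Equating and normalizing should produce the comparison
\[
A_N^g(x) = \frac{1}{\log\log P}\sum_{p \leq P}\frac{1}{p}\, A_{\lfloor N/p\rfloor}^{g\circ T}(x) + O\!\left(\frac{1}{\sqrt{\log\log P}}\right).
\]

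From here, I would pass to logarithmic averages---for which the rescaling $N \mapsto N/p$ is essentially invisible, since $\log(N/p)/\log N \to 1$---and conclude that the logarithmic averages of $g(T^{\Omega(n)}x)$ and $(g\circ T)(T^{\Omega(n)}x)$ coincide in the limit. Iterating $g \mapsto g \circ T^k$, averaging over $k \in \{1, \dots, K\}$, and invoking the uniform Birkhoff convergence $\frac{1}{K}\sum_{k=1}^K g \circ T^k \to \int g\, d\mu$ guaranteed by unique ergodicity will then force the logarithmic averages to converge to $\int g \, d\mu$. A Tauberian step upgrading logarithmic to Cesàro convergence (justified by the boundedness and regularity of $A_N^g(x)$) finishes the argument.

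\textbf{The hard part} is translating the rescaling-sensitive comparison with horizons $N/p$ into a clean statement that the averages of $g$ and $g \circ T$ agree in the limit. Logarithmic averaging circumvents the rescaling issue, but one must then calibrate $P = P(N) \to \infty$ so that the Tur\'an--Kubilius error is negligible and the Tauberian upgrade remains valid, all uniformly in $x \in X$ so as to yield the \emph{every-point} conclusion asserted by the theorem. This calibration, rather than any individual step, is the technical heart of the argument.
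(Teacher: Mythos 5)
Your engine---the identity $\Omega(pn)=\Omega(n)+1$, dilation by primes, and Tur\'an--Kubilius---is exactly the mechanism behind Bergelson--Richter's proof (the same mechanism this paper rehearses in \cref{subsec: Weak PNT} for the logarithmic Prime Number Theorem, via \cref{prop: Cor of Turan-Kubilius}), and your comparison of $A_N^g(x)$ with $\frac{1}{\log\log P}\sum_{p\le P}\frac1p A^{g\circ T}_{\lfloor N/p\rfloor}(x)$ is correct. Passing to logarithmic averages in $n$ does neutralize the rescaling $N\mapsto N/p$ (\cref{lem: Logarithmic Averages Trick}), and iterating $g\mapsto g\circ T^k$ together with the uniform convergence furnished by unique ergodicity would give the \emph{logarithmic} version of the theorem. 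The gap is your last step: there is no Tauberian passage from logarithmic to Ces\`aro convergence for bounded sequences. Ces\`aro convergence implies logarithmic convergence, not conversely; for instance $a(n)=1$ when $\lfloor\log_2 n\rfloor$ is even and $a(n)=0$ otherwise has logarithmic averages converging to $1/2$ while its Ces\`aro averages oscillate between roughly $1/3$ and $2/3$. So boundedness buys nothing, and the ``regularity'' you would need is a slow-oscillation property of $N\mapsto\frac1N\sum_{n\le N}g(T^{\Omega(n)}x)$ at the $\log\log N$ scale, which is a genuine quantitative fact about the distribution of $\Omega$ (essentially the Hardy--Ramanujan/Erd\H{o}s concentration exploited in \cref{subsec: Weighted Sums} via \cref{lem: Erdos to Gaussian}); you neither state nor prove it, and it is comparable in depth to the difficulty you are trying to sidestep.

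The actual proof (in \cite{BR2020}, and reproduced in spirit here as \cref{prop: Two sets Lemma,prop: Invariance Property} and the proof of \cref{thm: Generalization of EK + Thm A}) avoids any Tauberian step by a second comparison: alongside a finite set $B_1\subset\P$ of primes one chooses $B_2\subset\P_2$ of $2$-almost primes with the same number of elements in each $\rho$-adic interval. Dilating by $B_1$ turns $a(\Omega(n)+1)$ into $a(\Omega(n)+2)$, dilating by $B_2$ turns $a(\Omega(n))$ into $a(\Omega(n)+2)$, and the $\rho$-adic matching guarantees that the two resulting families of inner \emph{Ces\`aro} averages over $[N/p]$ agree up to $O(\epsilon)$ (item (iv) of \cref{prop: Two sets Lemma}). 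This yields
\[
\AVG a(\Omega(n)) \;=\; \AVG a(\Omega(n)+1) + o(1)
\]
directly for Ces\`aro averages; taking $a(k)=g(T^kx)$, every weak-* limit of the empirical measures $\frac1N\sum_{n\le N}\delta_{T^{\Omega(n)}x}$ is $T$-invariant, and unique ergodicity identifies it with $\mu$. To salvage your route you must either prove the slow-oscillation Tauberian condition (via Erd\H{o}s's estimate for $\pi_k(N)$) or import this two-set device; as written, the Ces\`aro statement of the theorem is not reached.
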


    The goal of this paper is to continue this dynamical exploration of the properties of $\Omega(n)$. Relaxing the assumptions of Bergelson and Richter's Theorem, we obtain further results regarding the convergence of ergodic averages along $\Omega(n)$. In \cref{subsec: Proof of Pointwise}, we show that pointwise almost-everywhere convergence for $L^1$ functions does not hold in any non-atomic ergodic system: 
    
    \begin{theorem}
    \label{thm: Pointwise Convergence along Omega}
        Let $(X, \mathcal{B}, \mu, T)$ be a non-atomic ergodic dynamical system. Then there is a set $A \in \mathcal{B}$ such that for almost every $x \in X$,
        \begin{equation}
        \label{eqn: Pointwise Averages}
            \limsup_{N \to \infty} \AVG 1_A(T^{\Omega(n)}x) = 1  
            \quad 
            \text{ and } 
            \quad
            \liminf_{N \to \infty} \AVG 1_A(T^{\Omega(n)}x) = 0,
        \end{equation}
        where $1_A$ denotes the indicator function of $A$. 
    \end{theorem}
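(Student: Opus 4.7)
The plan is to establish the strong sweeping-out property for $T^{\Omega(n)}$ by exploiting the remarkable concentration of $\Omega(n)$ around $\log \log n$ with Gaussian-scale fluctuations of size $\sqrt{\log \log n}$ (Hardy--Ramanujan / Erd\H{o}s--Kac), combined with a Rokhlin-tower construction that produces the single set $A$. First I would record the classical Hardy--Ramanujan estimate: for each $\epsilon>0$ there is $C_\epsilon$ such that for all large $N$,
\[
\frac{1}{N}\#\{n \leq N : |\Omega(n) - \log \log N| \leq C_\epsilon \sqrt{\log \log N}\} \geq 1 - \epsilon.
\]
Writing $\nu_N(k) := \tfrac{1}{N}\#\{n \leq N : \Omega(n) = k\}$ and $F_N := [\log \log N - C_\epsilon\sqrt{\log \log N},\, \log \log N + C_\epsilon\sqrt{\log \log N}] \cap \N$, one has $\nu_N(F_N) \geq 1-\epsilon$ with $|F_N| = O_\epsilon(\sqrt{\log \log N})$. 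The key identity is
\[
\frac{1}{N}\sum_{n=1}^N 1_A(T^{\Omega(n)} x) = \sum_{k \in \N} \nu_N(k)\, 1_A(T^k x),
\]
so the time-$N$ average depends, up to error $\epsilon$, only on the orbit coordinates $T^k x$ for $k \in F_N$.

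Next, using the glacial growth of $\log \log$, I would select a rapidly growing sequence $N_1 < N_2 < \cdots$ so that the windows $F_{N_j}$ are pairwise disjoint and widely separated in $\N$; since the separation requirement $\log \log N_{j+1} - \log \log N_j \gg \sqrt{\log \log N_j}$ forces only a doubly-exponential growth of $N_j$, this is easy. I would then construct a single $A \in \mathcal{B}$ by iteratively specifying it on disjoint tall Rokhlin towers so that for a generic $x$: at each even scale $N_{2j}$ the orbit $\{T^k x : k \in F_{N_{2j}}\}$ lies almost entirely in $A$, forcing $\sum_{k \in F_{N_{2j}}} \nu_{N_{2j}}(k)\, 1_A(T^k x) \to 1$; while at each odd scale $N_{2j+1}$ the orbit lies almost entirely in $A^c$, making the analogous sum tend to $0$. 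At each stage, Rokhlin's lemma (available since $T$ is aperiodic by ergodicity and non-atomicity) provides a tower of height $H_j \gg |F_{N_j}|$ covering all but $2^{-j}$ of $X$, and the disjointness of the $F_{N_j}$ ensures that instructions at different scales do not conflict. A Borel--Cantelli argument then upgrades these measure-theoretic statements to the almost-everywhere conclusions $\limsup_N = 1$ and $\liminf_N = 0$.

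The hard part will be the construction step, where two opposite specifications on a single measurable set $A$ must be coordinated across infinitely many scales, while keeping the exceptional set at each stage summable (so Borel--Cantelli can be invoked) and preventing the Hardy--Ramanujan approximation errors from accumulating. I expect this to be handled by a diagonal construction using a nested sequence of Rokhlin towers whose heights $H_j$ and exceptional measures are chosen summably in $j$; the non-atomicity of $\mu$ is precisely what guarantees enough room to refine the tower structure freely at every stage.
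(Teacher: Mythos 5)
Your first reduction (rewriting the average as $\sum_{k}\frac{\pi_k(N)}{N}1_A(T^kx)$ and using Hardy--Ramanujan to confine the weight to a window of length $O(\sqrt{\log\log N})$ around $\log\log N$) is exactly how the paper begins. But the core of your construction cannot work as stated. You ask for a single set $A$ and a deterministic sequence of scales $N_j$ such that, for almost every $x$, the weighted average is near $1$ at every even scale and near $0$ at every odd scale, and you plan to get this by Rokhlin towers plus Borel--Cantelli. Since $\mu$ is $T$-invariant and $\sum_k \pi_k(N)/N = 1$, for every fixed $N$ one has
\[
\int_X \AVG 1_A(T^{\Omega(n)}x)\, d\mu(x) \;=\; \sum_{k\ge 0}\frac{\pi_k(N)}{N}\,\mu(T^{-k}A) \;=\; \mu(A).
\]
So if at scale $N_{2j}$ the average exceeds $1-\epsilon$ on a set of measure at least $1-\epsilon$, then $\mu(A)\ge(1-\epsilon)^2$, while if at scale $N_{2j+1}$ it is below $\epsilon$ on a set of measure at least $1-\epsilon$, then $\mu(A)\le 2\epsilon$; these are incompatible for one set $A$. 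Hence no choice of towers, window separation, or summable exceptional sets can coordinate ``orbit window in $A$ at even scales, in $A^c$ at odd scales'' simultaneously for almost every $x$: the times realizing the $\limsup$ and the $\liminf$ must depend on the point $x$.

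This is precisely where the paper's proof differs. There the set built from the Rokhlin tower has \emph{small} measure: $A$ is the union of the tower levels indexed by the Hardy--Ramanujan window, and a point $x$ lying in level $\kappa(j)$ of the tower sees an average close to $1$ only at the $x$-dependent scale $N-j$. This yields only the hypothesis ``for every $\epsilon>0$ and $N_0$ there is $A$ with $\mu(A)<\epsilon$ and $\sup_{n\ge N_0}T_nA(x)>1-\epsilon$ on a set of measure at least $1-\epsilon$,'' and the passage from this to a single set (in fact a dense $G_\delta$ family of sets) satisfying $\limsup=1$ and $\liminf=0$ almost everywhere is made by the abstract strong sweeping-out lemma of Rosenblatt--Wierdl (\cref{lem: Banach}), a Baire-category argument over the measure algebra for which your Borel--Cantelli scheme is not a substitute. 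If you wish to avoid that lemma you would need explicit $x$-dependent bookkeeping of good scales; the deterministic alternating-scale plan is ruled out by the identity above. (A smaller point: the paper also smooths the weights $\pi_k(N)/N$ to a Gaussian via Erd\H{o}s's estimate before running the tower argument, which is convenient but not where the difficulty lies.)
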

    
    In particular, \cref{thm: Pointwise Convergence along Omega} demonstrates that the assumptions in \cref{thm: Bergelson-Richter Theorem} that the system is uniquely ergodic and $g$ is continuous are not only necessary for pointwise convergence to the proper limit, but for pointwise convergence to hold at all. To prove \cref{thm: Pointwise Convergence along Omega}, the key idea is to approximate the ergodic averages along $\Omega(n)$ by weighted sums. We show that for all $\epsilon > 0$ and $N \in \N$, there are weight functions $w_*(N): \N \to \R$, supported on large intervals $I_N$, such that
    \[
        \AVG 1_A(T^{\Omega(n)}x) = \sum_{k \in I_N} w_k(N) ~ 1_A(T^k x) +  O(\epsilon),
    \]  
    as $N$ tends to infinity. Leveraging the size and placement of the intervals $I_N$, we employ a standard argument to demonstrate the failure of pointwise convergence. Moreover, our method shows that there is not just one set $A \in \mathcal{B}$ for which \eqref{eqn: Pointwise Averages} holds, but rather there exists a dense $G_\delta$ subset $\mathcal{R} \sse \mathcal{B}$ such that \eqref{eqn: Pointwise Averages} holds for every $A \in \mathcal{R}$. Thus, the operators $T^{\Omega(n)}: \mathcal{B} \to L^1(\mu)$ defined by $T^{\Omega(n)}A(x) := 1_A(T^{\Omega(n)} x)$ are shown to have the \textit{strong sweeping-out property}.  
    
    In any ergodic system, the set of generic points has full measure. Generic points are those whose ergodic averages converge to $\int_X f \, d\mu$ for every continuous function $f$ (See \cref{sec: Background} for the precise definition). In light of \cref{thm: Bergelson-Richter Theorem,thm: Pointwise Convergence along Omega}, it is natural to wonder whether convergence still holds when the ergodic averages are taken along the sequence $\Omega(n)$. However, the answer is no, and in \cref{subsec: Generic Points}, we explicitly construct a symbolic system yielding a counterexample. 
    
    Bergelson and Richter show in \cite{BR2020} that \cref{thm: Bergelson-Richter Theorem} is a direct generalization of the Prime Number Theorem. In \cref{sec: EK + Thm A}, we demonstrate the relationship of \cref{thm: Bergelson-Richter Theorem} to another fundamental result from number theory, the Erd\H{o}s-Kac Theorem. Let $C_c(\R)$ denote the set of continuous functions on $\R$ of compact support. An equivalent version of the Erd\H{o}s-Kac Theorem states that for all $F \in C_c(\R)$, 
    \[
        \lim_{N \to \infty}
        \AVG F \Big( \frac{\Omega(n) - \log \log N }{\sqrt{\log \log N}} \Big) 
        = 
        \frac{1}{\sqrt{2\pi}} \int_{-\infty}^{\infty} F(x) e^{-x^2/2} \, dx.
    \] 
    Roughly speaking, this tells us that for large $N$, the sequence $\{\Omega(n): 1 \leq n \leq N\}$ approaches a normal distribution with mean and variance $\log \log N$. We have now introduced two sequences describing distinct behaviors of \( \Omega(n) \), \( \big\{F\Big( \frac{\Omega(n) - \log \log N }{\sqrt{\log \log N}} \Big) \big\}_{n = 1}^{N} \) capturing the Erd\H{o}s-Kac Theorem and \( \{ g(T^{\Omega(n)}x)\}_{n \in \N} \) capturing \cref{thm: Bergelson-Richter Theorem}.  Two sequences $a,b: \N \to \C$ are called \textit{asymptotically uncorrelated} if 
    \[
        \AVG a(n) \overline{b(n)} = \bigg( \AVG a(n) \bigg)\bigg( \AVG b(n) \bigg) + o(1).
    \]
    In \cref{sec: EK + Thm A}, we demonstrate that \cref{thm: Bergelson-Richter Theorem} and the Erd\H{o}s-Kac Theorem exhibit a form of disjointness, in that the sequences capturing their behavior are asymptotically uncorrelated:
    
    \begin{theorem}
    \label{thm: Generalization of EK + Thm A}
        Let $(X, \mu, T)$ be uniquely ergodic and let $F \in C_c(\R)$. Then 
            \[
                \lim_{N \to \infty} \AVG F \Big( \frac{\Omega(n) - \log \log N }{\sqrt{\log \log N}} \Big) g(T^{\Omega(n) }x) 
                =
                \Big(\frac{1}{\sqrt{2\pi}} \int_{-\infty}^{\infty} F(x) e^{-x^2/2} \, dx\Big)\Big( \int_X g \, d\mu \Big)
            \]
        for all $g \in C(X)$ and $x \in X$. 
    \end{theorem}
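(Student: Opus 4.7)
The plan is to decompose the sum according to the level sets $\{\Omega(n) = k\}$ and then separate the two contributions by means of the Erd\H{o}s-Kac theorem (for the arithmetic side, which controls the mass distribution in $k$) and the uniform ergodic theorem for uniquely ergodic systems (for the dynamical side, which controls averages of $g(T^k x)$ on long blocks of consecutive $k$). Since $F \in C_c(\R)$ is uniformly continuous, it can be uniformly approximated by a finite step function $F_\epsilon = \sum_i c_i\, 1_{[a_i, b_i)}$, so by linearity and the boundedness of $g$ it suffices to treat $F = 1_{[a, b)}$.

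Setting $M_N := \log\log N$, $\sigma_N := \sqrt{\log\log N}$, $I_N := \{k \in \Z : a \leq (k - M_N)/\sigma_N < b\}$, and $\pi_k(N) := \#\{n \leq N : \Omega(n) = k\}$, I rewrite the left-hand side as
\[
\AVG 1_{[a, b)}\Big(\frac{\Omega(n) - M_N}{\sigma_N}\Big) g(T^{\Omega(n)} x) = \sum_{k \in I_N} \frac{\pi_k(N)}{N}\, g(T^k x).
\]
I would then partition $[a, b)$ into $L = L(N)$ sub-intervals $J_1, \dots, J_L$ of equal length (with $L(N) \to \infty$ slowly), inducing a partition $I_N = \bigsqcup_\ell B_\ell$ into blocks with $|B_\ell| \approx (b-a)\sigma_N/L \to \infty$. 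For each $\ell$, the Erd\H{o}s-Kac theorem applied to $J_\ell$ yields
\[
W_{\ell, N} := \sum_{k \in B_\ell} \frac{\pi_k(N)}{N} \longrightarrow \frac{1}{\sqrt{2\pi}} \int_{J_\ell} e^{-y^2/2}\, dy,
\]
and the uniform ergodic theorem, applied to the shifted window $B_\ell$ of length tending to infinity, gives $\frac{1}{|B_\ell|} \sum_{k \in B_\ell} g(T^k x) \to \int g\, d\mu$ uniformly in the starting point of the block.

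To combine these inputs I write $g(T^k x) = \int g\, d\mu + e(k)$, so that each block contributes a main term $W_{\ell, N} \int g\, d\mu$; summing over $\ell$ and applying Erd\H{o}s-Kac on $[a,b)$ yields the desired product $\bigl(\tfrac{1}{\sqrt{2\pi}}\int_a^b e^{-y^2/2}\,dy\bigr)\bigl(\int g\, d\mu\bigr)$ in the limit. The error $\sum_{k \in B_\ell} (\pi_k(N)/N)\, e(k)$ I would control block-by-block by Abel summation, using the uniform ergodic theorem for a uniform $o(|B_\ell|)$ bound on partial sums of $e$, combined with a Sathe-Selberg-type local density estimate for $\pi_k(N)$ in the range $k \in I_N$ that delivers both the pointwise bound $\pi_k(N)/N = O(1/\sigma_N)$ and near-unimodality (hence small total variation) of the weight on each $B_\ell$. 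The main obstacle is calibrating the cutoff $L(N)$ so that the three disparate rates -- the convergence rate in the uniform ergodic theorem, the quantitative Erd\H{o}s-Kac rate on thin sub-intervals $J_\ell$, and the sharpness of the local density estimate -- are mutually compatible and together produce an $o(1)$ total error once summed over the $L(N)$ blocks.
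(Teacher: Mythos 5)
Your proposal is essentially correct, but it follows a genuinely different route from the paper. The paper runs the Bergelson--Richter strategy: it encodes the averages as signed measures $\mu_N=\frac1N\sum_{n\le N}F(\varphi(n))\,\delta_{T^{\Omega(n)}x}$, shows that every weak-* limit point is $T$-invariant, and then identifies the limit using unique ergodicity together with the Erd\H{o}s--Kac theorem (which only enters as a black box, to fix the total mass $\frac{1}{\sqrt{2\pi}}\int F(t)e^{-t^2/2}\,dt$). The real work is the invariance property $\AVG F(\varphi(n))\,a(\Omega(n))=\AVG F(\varphi(n))\,a(\Omega(n)+1)+o(1)$ (\cref{prop: Invariance Property}), proved by comparing the average with double averages over dilations by primes and $2$-almost primes, via Tur\'an--Kubilius-type bounds and the comparison sets $B_1\subset\P$, $B_2\subset\P_2$ of \cite{BR2020}; no local information about $\pi_k(N)$ is used. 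You instead work directly with the weights $\pi_k(N)/N$, feeding in a Sathe--Selberg/Erd\H{o}s local density estimate, uniform on the window $|k-\log_2 N|\le C\sqrt{\log_2 N}$ (which is all you need, since $F$ has compact support), together with the uniform convergence of Birkhoff averages over shifted blocks that unique ergodicity guarantees, and you combine the two by summation by parts. Your route requires a strictly stronger arithmetic input than Erd\H{o}s--Kac itself---though it is exactly the estimate the paper uses elsewhere, in \cref{lem: Erdos to Gaussian}---and in exchange it gives a quantitative, hands-on proof that makes the product (independence) structure transparent and avoids the prime-dilation machinery altogether.

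Two repairs are needed in your outline. First, the plain Erd\H{o}s--Kac theorem does not apply to the shrinking sub-intervals $J_\ell$, and ``near-unimodality'' of the true weights $\pi_k(N)/N$ is not something the local estimate gives you (the $1+o(1)$ factor may oscillate in $k$, so the total variation of the actual weights is not controlled a priori). The clean fix is to first replace $\pi_k(N)/N$ by its smooth Gaussian main term, at a total $\ell^1$ cost of $o(1)$ because the relative error is uniform on the window and the total mass is $O(1)$, and only then perform Abel summation against the smooth weight; after your reduction to $F=1_{[a,b)}$ this weight has variation $O(1/\sqrt{\log_2 N})$, which is what the summation by parts needs. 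Second, the calibration of $L(N)$ that you flag as the main obstacle is not a genuine one: with the usual order of quantifiers (fix the step-function approximation and the tolerance in the uniform ergodic theorem, let $N\to\infty$, then shrink the tolerance) the errors close, and in fact the blocking can be dispensed with entirely---a single summation by parts over the whole window of length $\asymp\sqrt{\log_2 N}$ suffices.
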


    We show that \cref{thm: Generalization of EK + Thm A} can be viewed as a corollary of the following more general estimate. Let \( \varphi(n) = \frac{\Omega(n) - \log \log N }{\sqrt{\log \log N}}\). Then for any bounded arithmetic function \( a: \N \to \C \),
    \begin{equation}
    \label{eqn: Invariance Property}
        \AVG F(\varphi(n)) \, a(\Omega(n)) = \AVG F(\varphi(n)) \, a(\Omega(n)+1)  + o(1). 
    \end{equation}
    
    For the proof of \eqref{eqn: Invariance Property}, our strategy is to approximate each average by a double average involving dilations by primes. The key observation is that \( F(\varphi(n)) \) is asymptotically invariant under dilations by primes, whereas \( \Omega(n) \) is highly sensitive to such dilations. This sensitivity is particularly noticeable in the case that \( a(n) = (-1)^n \), so that \( a(\Omega(p n)) = - a(\Omega(n)) \). We leverage these contrasting behaviors to obtain the desired invariance in Equation \eqref{eqn: Invariance Property}. 
    
    Let $\lambda(n) = (-1)^{\Omega(n)}$ denote the classical Liouville function. Another equivalent formulation of the Prime Number Theorem states that 
    \[
        \lim_{N \to \infty} \AVG \lambda(n) = 0.
    \]  
    This formulation of the Prime Number Theorem can be seen as a special case of \cref{thm: Bergelson-Richter Theorem} by choosing $(X, \mu, T)$ to be the uniquely ergodic system given by rotation on two points (See \cite{BR2020} or \cref{sec: Background} for details). In a similar fashion, we obtain the following corollary of \cref{thm: Generalization of EK + Thm A}:
    
    \begin{corollary} 
    \label{thm: Generalization of EK + PNT}
        Let $F \in C_c(\R)$.
        Then 
        \[
            \lim_{N \to \infty} \AVG F \Big( \frac{\Omega(n) - \log\log N }{\sqrt{\log \log N}} \Big) \lambda(n) = 0.
        \]
    \end{corollary}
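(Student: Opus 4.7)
The plan is to obtain \cref{thm: Generalization of EK + PNT} as an immediate specialization of \cref{thm: Generalization of EK + Thm A}, by encoding the Liouville function as $g(T^{\Omega(n)}x)$ for a carefully chosen uniquely ergodic system, exactly mirroring the way the classical Prime Number Theorem is recovered from \cref{thm: Bergelson-Richter Theorem}.

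Concretely, I would take $X = \Z/2\Z = \{0,1\}$ with $T$ the rotation $T x = x + 1 \pmod 2$ and $\mu$ the uniform probability measure. This system is uniquely ergodic, since $\mu$ is the only $T$-invariant probability measure on the two-point space. Define $g : X \to \R$ by $g(0) = 1$ and $g(1) = -1$. The space $X$ is discrete, so $g \in C(X)$ automatically, and $\int_X g \, d\mu = 0$.

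Next I would compute, taking the base point $x = 0$, that $T^{\Omega(n)} 0 = \Omega(n) \pmod 2$, hence
\[
    g(T^{\Omega(n)} 0) = (-1)^{\Omega(n)} = \lambda(n).
\]
Substituting this identity and $\int_X g \, d\mu = 0$ into the conclusion of \cref{thm: Generalization of EK + Thm A} yields
\[
    \lim_{N \to \infty} \AVG F \Big( \frac{\Omega(n) - \log\log N }{\sqrt{\log \log N}} \Big) \lambda(n)
    = \Big(\tfrac{1}{\sqrt{2\pi}} \int_{-\infty}^{\infty} F(x) e^{-x^2/2} \, dx\Big) \cdot 0 = 0,
\]
which is precisely the desired statement.

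There is essentially no obstacle in this argument once \cref{thm: Generalization of EK + Thm A} is granted; the only conceptual point is the dynamical encoding of $\lambda$ via the two-point rotation, which is standard and is exactly the encoding used in \cite{BR2020} to derive the Prime Number Theorem from \cref{thm: Bergelson-Richter Theorem}. Thus no additional number-theoretic or analytic input is needed beyond verifying that the chosen system is uniquely ergodic and that $g$ lies in $C(X)$, both of which are immediate for a finite discrete space.
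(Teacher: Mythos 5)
Your proposal is correct and follows essentially the same route as the paper: specialize \cref{thm: Generalization of EK + Thm A} to the uniquely ergodic two-point rotation, encode $\lambda(n)$ as $g(T^{\Omega(n)}0)$, and use $\int_X g\,d\mu = 0$. If anything, your version is slightly cleaner, since you use $\int_X g\,d\mu = 0$ directly rather than invoking the Prime Number Theorem as the paper's write-up does (and your sign convention $g(0)=1$, $g(1)=-1$ makes the identity $g(T^{\Omega(n)}0)=\lambda(n)$ exact).
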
 
    
    \cref{thm: Generalization of EK + PNT} demonstrates that the behaviors of $\Omega(n)$ captured by the Erd\H{o}s-Kac Theorem and the Prime Number Theorem exhibit disjointness. This can be interpreted as saying that, for large $N$, the sequence \( \{ \Omega(n) : 1 \leq n \leq N, \Omega(n) \text{ is even} \} \) still approaches a normal distribution with mean and variance $\log \log N$.

\section{Background Material}
\label{sec: Background}

    \subsection{Measure-preserving systems}
    \label{subsec: Dynamics}
    
    By a \textit{topological dynamical system}, we mean a pair $(X,T)$, where $X$ is a compact metric space and $T$ a homeomorphism of $X$. A Borel probability measure $\mu$ on $X$ is called \textit{T-invariant} if $\mu(T^{-1}A) = \mu(A)$ for all measurable sets $A$. By the Bogolyubov-Krylov theorem (see for instance \cite[Corollary 6.9.1]{Walters1982}), every topological dynamical system has at least one $T$-invariant measure. If a topological system $(X, T)$ admits only one such measure, $(X, T)$ is called \textit{uniquely ergodic}. 
    
    By a \textit{measure-preserving dynamical system}, we mean a probability space $(X, \mathcal{B}, \mu)$, where $X$ is a compact metric space and $\mathcal{B}$ the Borel $\sigma$-algebra on $X$, accompanied by a measure-preserving transformation $T: X \to X$. We often omit the $\sigma$-algebra $\mathcal{B}$ when there is no ambiguity. A measure-preserving dynamical system is called \textit{ergodic} if for any $A \in \mathcal{B}$ such that $T^{-1}A = A$, one has $\mu(A) = 0$ or $\mu(A) = 1$. It is easy to check that in a uniquely ergodic system, the unique invariant measure is ergodic.  
    
    One of the most fundamental results in Ergodic Theory is the Birkhoff Pointwise Ergodic Theorem, which states that for any ergodic system $(X, \mu, T)$ and $f \in L^1(\mu)$,
    \[
        \lim_{N \to \infty} \AVG f(T^n x) = \int_X f \, d\mu
    \]  
    for almost every $x \in X$. 
    
    A point $x \in X$ is called \textit{generic for the measure $\mu$} if 
    \[
        \lim_{N \to \infty} \AVG f(T^n x) = \int_X f \, d\mu
    \]
    for all $f \in C(X)$, where $C(X)$ denotes the space of continuous functions on $X$. Thus, generic points are those for which pointwise convergence holds for every continuous function. When $\mu$ is ergodic, the set of generic points has full measure.

    \subsection{Symbolic Systems}
    \label{subsec: Symbolic Dynamics}
    
    Let $\mathcal{A}$ be a finite set of symbols. Let $\mathcal{A}^\N$ denote the set of all infinite sequences with entries coming from $\mathcal{A}$. The set $\mathcal{A}^\N$ is endowed with the product topology coming from the discrete topology on the alphabet $\mathcal{A}$. In fact, this forms $\mathcal{A}^{\N}$ into a compact metric space. Denote an element in  $\mathcal{A}^\N$ by $\mathbf{x} = (x_i)_{i \in \N}$. One equivalent choice of metric generated by this topology is given by
    \[
        \text{d}(\mathbf{x}, \mathbf{y}) = 2^{-\inf \{i \in \N\,:\, x_i \neq y_i\}}. 
    \]
    This space carries a natural homomorphism $\sigma: \mathcal{A}^\N \to \mathcal{A}^\N$, called the \textit{left shift}, defined by \( (\sigma \mathbf{x})_i = x_{i+1} \).

    \subsection{Background on $\Omega(n)$}
    \label{subsec: Number Theory}
    
    Let $\Omega(n)$ denote the number of prime factors of $n$, counted with multiplicity. One equivalent formulation of the Prime Number Theorem \cite{Landau1953, Mangoldt1897} states that asymptotically, $\Omega(n)$ is even half of the time. This statement can be expressed by the classical Liouville function $\lambda(n) = (-1)^{\Omega(n)}$:
    \[
        \lim_{N \to \infty} \AVG \lambda(n) = 0.
    \]
    As a statement involving averages, this formulation is useful from a dynamical point of view. However, a rephrasing of this statement leads to several naturally stated generalizations. For a set $E \sse \N$, the \textit{natural density of E in $\N$} is defined to be
    \[
        \lim_{N \to \infty} \frac{|E \cap \{1, \dots, N\}|}{N}.
    \]
    Define $E_2 = \{n \geq 1 \,:\, \Omega(n) \equiv 0 \mod 2\}$. Then the Prime Number Theorem states that $E_2$ has natural density 1/2. In other words, $\Omega(n)$ distributes evenly over residue classes mod 2. The following theorem due to Pillai and Selberg \cite{Pillai1940, Selberg1939}  states that $\Omega(n)$ distributes over all other residue classes as well:
    
    \begin{theorem}[Pillai, Selberg]
    \label{thm: Pillai-Selberg}
        For all $m \in \N$ and $r \in \{0,\dots, m-1\}$, the set $E_m := \{n \in \N : \Omega(n) \equiv r \mod m\}$ has natural density $1/m$.
    \end{theorem}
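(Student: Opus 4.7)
The plan is to deduce \cref{thm: Pillai-Selberg} as a short corollary of the Bergelson--Richter theorem (\cref{thm: Bergelson-Richter Theorem}), by encoding the congruence condition $\Omega(n) \equiv r \mod{m}$ as the value of a continuous function evaluated along a dynamical orbit in a uniquely ergodic system.

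First I would set up the dynamical system. Let $X = \Z/m\Z$ equipped with the discrete topology and uniform probability measure $\mu$, and let $T: X \to X$ be the cyclic shift $T(x) = x + 1$. Since $T$ acts transitively on the finite set $X$, the only $T$-invariant Borel probability measure is $\mu$, and hence $(X, T)$ is uniquely ergodic. Now take $g = 1_{\{r\}} \in C(X)$ (continuity is automatic, as $X$ is discrete) and the base point $x = 0 \in X$. Since $T^{\Omega(n)} 0$ is the residue class of $\Omega(n)$ modulo $m$, for every $n \in \N$ we have
\[
    g(T^{\Omega(n)} 0) = 1_{E_m}(n).
\]
Applying \cref{thm: Bergelson-Richter Theorem} to this data yields
\[
    \lim_{N \to \infty} \AVG 1_{E_m}(n) = \int_X g \, d\mu = \frac{1}{m},
\]
which is exactly the statement that $E_m$ has natural density $1/m$.

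The main obstacle in this strategy is hidden entirely inside \cref{thm: Bergelson-Richter Theorem}; once that theorem is granted, the dynamical reformulation is essentially free, and the only verification needed is the unique ergodicity of the cyclic shift on $\Z/m\Z$. If one instead wanted a self-contained classical proof, the natural plan would be to expand the indicator of the congruence class via roots of unity,
\[
    1_{\{\Omega(n) \equiv r \mod{m}\}} = \frac{1}{m}\sum_{j=0}^{m-1} e^{-2\pi i j r / m} \, e^{2\pi i j \Omega(n)/m},
\]
and then to show that for each $j \in \{1, \dots, m-1\}$ the mean value of the completely multiplicative function $n \mapsto e^{2\pi i j \Omega(n)/m}$ tends to zero. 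This is a Halász/Selberg--Delange-type input whose $j = m/2$ case (for even $m$) reduces to the Prime Number Theorem in the form $\AVG \lambda(n) = o(1)$.
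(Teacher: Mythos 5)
Your argument is correct and is essentially the route the paper itself points to: the paper states \cref{thm: Pillai-Selberg} as a classical result of Pillai and Selberg and notes that it follows from \cref{thm: Bergelson-Richter Theorem} via exactly this kind of finite rotation argument (detailed in the paper only for the two-point rotation giving the Prime Number Theorem, with the $m$-point case referred to \cite[p.3]{BR2020}). Your verification of unique ergodicity of the cyclic shift on $\Z/m\Z$ and the identity $g(T^{\Omega(n)}0) = 1_{E_m}(n)$ with $g = 1_{\{r\}}$ is exactly what is needed, so nothing further is required.
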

    
    Complementing this result of Pillai and Selberg is a theorem due to Erd\H{o}s and Delange. A sequence $\{a(n)\}_{n \in \N} \sse \R$ is \textit{uniformly distributed mod 1} if 
     \[
        \lim_{N \to \infty} \AVG f(a(n)) = \int_{[0,1]} f ~ d\mu 
     \]
     for all continuous functions $f: [0,1] \to \C$. Erd\H{o}s mentions without proof \cite[p.2]{Erdos1946} and Delange later proves \cite{Delange1958} the following statement:
    
    \begin{theorem}[Erd\H{o}s, Delange]
    \label{thm: Erdos-Delange}
        Let $\alpha \in \R\setminus \Q$. Then $\{\Omega(n)\alpha\}_{n \in \N}$ is uniformly distributed mod 1. 
    \end{theorem}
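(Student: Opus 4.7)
The plan is to derive \cref{thm: Erdos-Delange} as an immediate corollary of Bergelson and Richter's \cref{thm: Bergelson-Richter Theorem}. The key observation is that uniform distribution of $\{\Omega(n)\alpha\}_{n \in \N}$ mod 1 is, by Weyl's equidistribution criterion, equivalent to the convergence $\AVG g(\Omega(n)\alpha) \to \int_\T g$ for every continuous $g$ on $\T = \R/\Z$, and this is precisely the conclusion of \cref{thm: Bergelson-Richter Theorem} applied to the right system.

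Specifically, I would consider the topological dynamical system $(\T, T_\alpha)$ where $T_\alpha x = x + \alpha \bmod 1$. Since $\alpha$ is irrational, a classical theorem of Weyl guarantees that $(\T, T_\alpha)$ is uniquely ergodic with Lebesgue measure as the unique invariant measure. Applying \cref{thm: Bergelson-Richter Theorem} with $x = 0$ yields
\[
    \lim_{N \to \infty} \AVG g(T_\alpha^{\Omega(n)} 0) = \int_\T g \, dx
\]
for all $g \in C(\T)$. Since $T_\alpha^{\Omega(n)} 0 = \Omega(n)\alpha \bmod 1$, this is the desired statement. To reconcile with the definition of uniform distribution in terms of continuous functions on $[0,1]$, one notes that every $f \in C([0,1])$ can be $L^1$-approximated by a continuous function on $\T$ by modifying it near the endpoints, an error that vanishes after averaging.

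Because Delange's original proof predates \cref{thm: Bergelson-Richter Theorem}, I would also outline the analytic route one would take absent any dynamical input, for contrast. By Weyl's criterion, it suffices to show $\AVG e^{2\pi i k\alpha \, \Omega(n)} \to 0$ for every nonzero $k \in \Z$. Setting $z = e^{2\pi i k\alpha}$, the Dirichlet series $\sum_n z^{\Omega(n)}n^{-s} = \prod_p (1 - zp^{-s})^{-1}$ factors as $\zeta(s)^z \cdot G(s,z)$ with $G$ analytic in a larger half-plane. The Selberg-Delange method then yields $\sum_{n \leq N} z^{\Omega(n)} = O\bigl(N(\log N)^{\operatorname{Re}(z)-1}\bigr)$; since $k\alpha \notin \Z$ forces $\operatorname{Re}(z) = \cos(2\pi k\alpha) < 1$, the normalized sum tends to zero. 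The principal obstacle in this self-contained route is the contour integration and analytic continuation of $\zeta(s)^z$ underlying Selberg-Delange, which the dynamical route via \cref{thm: Bergelson-Richter Theorem} bypasses entirely, reducing the proof to the verification that irrational rotation is uniquely ergodic.
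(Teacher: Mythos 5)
Your derivation is correct, and it is essentially the route this paper has in mind: the paper itself does not prove \cref{thm: Erdos-Delange} (it cites Erd\H{o}s's remark and Delange's analytic proof \cite{Delange1958}), but it explicitly points to \cite[p.3]{BR2020} for exactly the argument you give, namely applying \cref{thm: Bergelson-Richter Theorem} to the uniquely ergodic irrational rotation $(\T, x \mapsto x+\alpha)$ at the point $x=0$. Two small remarks. First, the circularity check matters and comes out fine: the proof of \cref{thm: Bergelson-Richter Theorem} in \cite{BR2020} is elementary (Tur\'an--Kubilius-type estimates and dilation by primes) and does not use Erd\H{o}s--Delange, so deducing \cref{thm: Erdos-Delange} from it is legitimate, and there is no clash with the paper's later use of Erd\H{o}s--Delange in \cref{thm: Mean Convergence along Omega}. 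Second, your endpoint-reconciliation step (approximating $f \in C([0,1])$ by a function on $\T$ and claiming the error ``vanishes after averaging'') is the one slightly delicate point, since controlling the averaged error near the modification region is itself an equidistribution-type statement; it is cleaner to apply \cref{thm: Bergelson-Richter Theorem} directly to the characters $g_k(x) = e^{2\pi i k x}$, $k \neq 0$, which are continuous on $\T$ and have $\int_\T g_k = 0$, and then conclude by Weyl's criterion. Your sketch of the Selberg--Delange alternative is accurate in outline (with $|z|=1$, $z \neq 1$, one gets $\sum_{n \le N} z^{\Omega(n)} \ll N(\log N)^{\operatorname{Re}(z)-1}$), and it is a fair description of the analytic machinery that the dynamical argument avoids.
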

    
    Bergelson and Richter's \cref{thm: Bergelson-Richter Theorem} uses dynamical methods to provide a simultaneous generalization of these number theoretic results. We review their argument for obtaining the Prime Number Theorem from \cref{thm: Bergelson-Richter Theorem} (see \cite[p.3]{BR2020} for obtaining the Pillai-Selberg and Erd\H{o}s-Delange Theorems), as we use a similar argument in \cref{subsec: Proof of EK + Thm A} to obtain \cref{thm: Generalization of EK + PNT} from \cref{thm: Generalization of EK + Thm A}. Let $X = \{0,1\}$ and define $T: X \to X$ by $T(0) = 1$ and $T(1) = 0$. Let $\mu$ be the Bernoulli measure given by $\mu(\{ 0 \}) = 1/2$ and $\mu(\{ 1 \}) = 1/2$. This system is commonly referred to as rotation on two points and is uniquely ergodic. Define a continuous function $F: X \to \R$ by $F(0) = 1$ and $F(1) = -1$. Then
    \[
        \int_X F(x) \, d\mu(x) = \frac{1}{2} F(0) + \frac{1}{2} F(1) = 0.
    \]
    Finally, one can check that
    \[
        \lambda(n) = (-1)^{\Omega(n)} = F(T^{\Omega(n)}0) .
    \]
    Hence
    \[
        \lim_{N \to \infty} \AVG \lambda(n) = \lim_{N \to \infty} \AVG F(T^{\Omega(n)}0) = 0,
    \]
    where the last equality follows by \cref{thm: Bergelson-Richter Theorem}.

    We now state two theorems that give further insight into the statistical properties of $\Omega(n)$. Hardy and Ramanujan showed that the normal order of $\Omega(n)$ is roughly $\log \log n$ \cite{HR1917}: 

    \begin{theorem}[Hardy-Ramanujan Theorem]
    \label{thm: Hardy Ramanujan}
        For $C > 0$, define $g_C: \N \to \N$ by: 
         \[
            g_C(N) = \# \Big\{ n \leq N : |\Omega(n) - \log \log n| > C \sqrt{\log \log N} \Big\}.
        \]
        Then for all $\epsilon > 0$, there is some $C \geq 1$ such that 
        \[
            \limsup_{N \to \infty} \frac{g_C(N)}{N} \leq \epsilon.
        \]
    \end{theorem}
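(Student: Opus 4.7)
The plan is to prove this via a second-moment argument (the Turán--Kubilius method), which is the classical route to the Hardy--Ramanujan theorem. The key input is the asymptotic estimate $\sum_{p \le N} 1/p = \log \log N + M + o(1)$ (Mertens's theorem), where $M$ is the Meissel--Mertens constant. From this, writing $\Omega(n) = \sum_{p^k \mid n} 1$ and exchanging the order of summation, one gets
\[
    \sum_{n \le N} \Omega(n)
    = \sum_{p^k \le N} \Big\lfloor \frac{N}{p^k} \Big\rfloor
    = N \log \log N + O(N),
\]
where the contribution of higher prime powers $p^k$ with $k \ge 2$ is $O(N)$ since $\sum_p \sum_{k \ge 2} p^{-k}$ converges.

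Next I would bound the second moment. Expanding $\Omega(n)^2 = \sum_{p^j \mid n}\sum_{q^k \mid n} 1$ and splitting according to whether $p = q$ or not, the diagonal contributes $O(N \log \log N)$ (by the first-moment estimate) and the off-diagonal piece gives
\[
    \sum_{\substack{p, q \le N\\ p \ne q}} \Big\lfloor \frac{N}{pq} \Big\rfloor
    = N \Big( \sum_{p \le N} \frac{1}{p} \Big)^2 + O\Big( N \sum_{p \le N} \frac{1}{p} \Big)
    = N(\log \log N)^2 + O(N \log \log N).
\]
Combining the first- and second-moment estimates yields the variance bound
\[
    \sum_{n \le N} \big( \Omega(n) - \log \log N \big)^2 = O(N \log \log N).
\]
Chebyshev's inequality then gives, for any $C \ge 1$,
\[
    \# \big\{ n \le N : |\Omega(n) - \log \log N| > C\sqrt{\log \log N} \big\}
    \le \frac{K \, N}{C^2}
\]
for some absolute constant $K$ independent of $C$ and $N$. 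Choosing $C$ with $K/C^2 < \epsilon/2$ bounds the exceptional set at the level $\log \log N$.

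The remaining step is the replacement of $\log \log N$ by $\log \log n$ inside the absolute value. For $n \in [\sqrt{N}, N]$ one has $|\log \log n - \log \log N| \le \log 2$, which is dominated by $C \sqrt{\log \log N}$ once $N$ is large, so after enlarging $C$ by a harmless absolute constant the two versions of the counting function differ by at most $O(N^{1/2})$ coming from the range $n < \sqrt{N}$. The genuine obstacle in this scheme is the control of the off-diagonal second-moment term: one must be careful that the cross term produces exactly $N(\log \log N)^2$ (so as to cancel the square of the mean) rather than merely $O(N(\log \log N)^2)$, since otherwise the variance does not decouple and Chebyshev gives nothing. This precise cancellation is what makes the Turán--Kubilius estimate work and is the heart of the argument.
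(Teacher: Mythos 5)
The paper does not actually prove this statement: it is quoted as classical background (Theorem~2.5 in the numbering of Section~2) with a citation to Hardy--Ramanujan \cite{HR1917}, so there is no in-paper argument to compare against. Your second-moment (Tur\'an) argument is the standard modern proof and is essentially correct: the first moment $\sum_{n\le N}\Omega(n)=N\log\log N+O(N)$, the second moment $\sum_{n\le N}\Omega(n)^2=N(\log\log N)^2+O(N\log\log N)$, Chebyshev at level $C\sqrt{\log\log N}$, and the passage from $\log\log N$ to $\log\log n$ at the cost of the range $n<\sqrt N$ all go through as you describe, and this gives the full strength of the statement (in fact $\limsup g_C(N)/N=O(1/C^2)$). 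Two small points deserve one extra line each if you write this up. First, in the off-diagonal term the expansion of $\Omega(n)^2$ over prime powers also produces cross terms $p^jq^k$ with $j+k\ge 3$; these are absorbed into $O(N\log\log N)$ since $\sum_p\sum_{j\ge 2}p^{-j}=O(1)$, but your displayed off-diagonal sum silently keeps only $j=k=1$. Second, when you assert $\sum_{p\ne q}\lfloor N/(pq)\rfloor=N(\sum_{p\le N}1/p)^2+O(N\log\log N)$, the upper bound is immediate, while the matching lower bound needs the restriction $pq\le N$ (the floor vanishes otherwise) together with $\bigl(\sum_{p\le\sqrt N}1/p\bigr)^2=(\log\log N)^2+O(\log\log N)$ and the bound $\pi_2(N)=o(N\log\log N)$ for the error from removing the floors; this is exactly the ``precise cancellation'' you flag, and it does work, but it is where the details live. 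Note also that the classical references often state the result for $\omega(n)$; your choice to run the moment computation directly for $\Omega(n)$ is fine because the diagonal and higher prime-power contributions are $O(N\log\log N)$ and $O(N)$ respectively, as you indicate.
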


    Erd\H{o}s and Kac later generalized this theorem to show that $\Omega(n)$ actually becomes normally distributed within such intervals \cite{EK1940}:

    \begin{theorem}[Erd\H{o}s-Kac Theorem]
    \label{thm: Erdos-Kac}
        Define $K_N: \Z \times \Z \to \N$ by:
        \[
            K_N(A,B) 
            = 
            \Big| \Big\{ n \leq N \,:\, A \leq \frac{\Omega(n) - \log\log N}{\sqrt{\log\log N}} \leq B \Big\}\Big|.
        \]
        Then
        \[
            \lim_{N \to \infty} \frac{K_N(A,B)}{N} 
            = 
            \frac{1}{\sqrt{2\pi}} \int_{A}^B e^{-t^2/2} d t.
        \]
    \end{theorem}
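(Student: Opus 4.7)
The plan is to prove Erd\H{o}s--Kac by the method of moments. First I would reduce the statement to the corresponding assertion for $\omega(n)$, the number of \emph{distinct} prime factors: since $\Omega(n) - \omega(n) = \sum_{p^k \| n,\, k \geq 2}(k-1)$ has bounded mean and variance as $n$ ranges over $[1,N]$, dividing by $\sqrt{\log\log N}$ kills this difference in $L^2$, so $\Omega$ may be replaced by $\omega$ in the limit. Next I would truncate at a slowly growing $y = y(N)$ (say $y = N^{1/\log\log N}$) and work with $\omega_y(n) := \sum_{p \mid n,\, p \leq y} 1$; a second-moment estimate using $\sum_{y < p \leq N} 1/p = O(\log\log N / \log\log y)$ type bounds shows that $\omega - \omega_y$ is negligible after normalization by $\sqrt{\log\log N}$.

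The probabilistic heart of the argument is to model the indicators $X_p(n) := \mathbf{1}_{p \mid n}$ as approximately independent Bernoullis. Concretely, for any tuple of distinct primes $p_1, \ldots, p_k \leq y$ one has
\[
\frac{1}{N}\sum_{n \leq N} X_{p_1}(n)\cdots X_{p_k}(n) = \frac{1}{p_1 \cdots p_k} + O\Big(\frac{1}{N}\Big),
\]
and since $p_1 \cdots p_k \leq y^k$ is much smaller than $N$ (for our choice of $y$ and any fixed $k$), the error term is harmless. Combined with Mertens' estimate $\sum_{p \leq y} 1/p = \log\log y + M + o(1)$ and $\log\log y \sim \log\log N$, this gives the correct centering for $\omega_y$.

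The main step is the moment computation. Writing $Y_p(n) := X_p(n) - 1/p$, I would expand for each fixed integer $k \geq 1$,
\[
\frac{1}{N}\sum_{n \leq N}\Big(\sum_{p \leq y} Y_p(n)\Big)^{k} = \sum_{p_1, \ldots, p_k \leq y} \frac{1}{N}\sum_{n \leq N} Y_{p_1}(n)\cdots Y_{p_k}(n),
\]
and analyze the inner term according to the set partition of $\{1, \ldots, k\}$ induced by which indices carry the same prime. Tuples in which some prime appears exactly once contribute $0$ up to a harmless error, thanks to the centering of $Y_p$ and the quasi-independence estimate above. The dominant contribution comes from \emph{pure pairings}: a standard combinatorial count together with $\sum_{p_1 \neq p_2 \leq y} 1/(p_1 p_2) = (\log\log y)^{2} + O(\log\log y)$ yields that the contribution of pairings equals $(k-1)!!\,(\log\log y)^{k/2} + o\!\big((\log\log N)^{k/2}\big)$ for even $k$ and $o((\log\log N)^{k/2})$ for odd $k$. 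Dividing by $(\log\log N)^{k/2}$ produces exactly the Gaussian moments $(k-1)!!$ and $0$, and since the Gaussian is determined by its moments, the method of moments gives convergence in distribution. Testing against $\mathbf{1}_{[A,B]}$ (approximated from above and below by continuous compactly supported functions) then yields the claim $K_N(A,B)/N \to \tfrac{1}{\sqrt{2\pi}}\int_A^B e^{-t^2/2}\,dt$.

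The main obstacle will be organizing the moment expansion cleanly. One has to quantify two separate error sources: tuples with an unpaired prime (killed by centering, but only after invoking quasi-independence with a uniform error over $k$-tuples), and tuples where some prime appears three or more times (smaller by a factor of $\log\log y$ than the pure pairings, but one must confirm this uniformly in the combinatorial type). A careful bookkeeping using the slow growth of $y$ and the uniform estimate $\sum_{p_1 < \cdots < p_j \leq y} 1/(p_1 \cdots p_j) = (1+o(1))(\log\log y)^j / j!$ for each fixed $j$ is the technical heart of the argument.
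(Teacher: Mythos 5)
Your outline is a correct sketch of the Erd\H{o}s--Kac theorem by the method of moments, but note that the paper does not prove this statement at all: it is quoted as classical background and attributed to Erd\H{o}s and Kac \cite{EK1940}, so there is no internal argument to compare against. Your route is the standard modern one (Billingsley, Granville--Soundararajan): pass from $\Omega$ to $\omega$ by an $L^2$ bound, truncate to primes $p\le y$ with $y=N^{1/\log\log N}$, exploit quasi-independence of the indicators $\mathbf{1}_{p\mid n}$, and match moments of $\sum_{p\le y}(\mathbf{1}_{p\mid n}-1/p)$ with Gaussian moments via the pairing count $(k-1)!!$; this differs from the original Erd\H{o}s--Kac argument, which combined Brun's sieve with the central limit theorem, and from later characteristic-function proofs, but it is complete in outline and arguably the cleanest. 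Two points deserve a sentence each in a full write-up: first, your tail bound should be stated as $\sum_{y<p\le N}1/p=\log\log N-\log\log y+o(1)=\log\log\log N+o(1)$ (the form ``$O(\log\log N/\log\log y)$'' is not quite the right shape, though the conclusion that the truncation is negligible after dividing by $\sqrt{\log\log N}$ is correct for your choice of $y$); second, your moments are centered at $\sum_{p\le y}1/p$ rather than at $\log\log N$, so you must record explicitly that the discrepancy $\log\log N-\sum_{p\le y}1/p=\log\log\log N+O(1)$ is $o(\sqrt{\log\log N})$ and hence does not affect the limit law. With those details, and the bookkeeping of error terms you already flag (singleton blocks killed by centering up to a total error $O(2^k y^k/N)$, blocks of size at least three losing a factor of $\log\log y$), the argument closes, and the final passage from convergence in distribution to the statement about $K_N(A,B)/N$ is immediate since the Gaussian distribution function is continuous.
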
 
    
    Thus, the Erd\H{o}s-Kac Theorem states that for large $N$, the number of prime factors of an integer $n \leq N$ becomes roughly normally distributed with mean and variance $\log \log N$. Recall that the Prime Number Theorem has an equivalent formulation in terms of averages of the Liouville function, making it well suited for dynamical settings. Similarly, the Erd\H{o}s-Kac Theorem can be stated in terms of averages. Let $C_c(\R)$ denote the set of continuous functions on $\R$ with compact support and let $F \in C_c(\R)$. \cref{thm: Erdos-Kac} is equivalent to the statement
    \[
        \lim_{N \to \infty} \AVG F \Big( \frac{\Omega(n) - \log \log N }{\sqrt{\log \log  N}} \Big) 
        = 
        \frac{1}{\sqrt{2\pi}} \int_{-\infty}^{\infty} F(x) e^{-x^2/2} \, dx.
    \]
    
    \vspace{10pt}

    One direction of this equivalence follows by setting $F(x)$ to be the indicator function on the interval $[A,B]$. The other follows from the fact that any compactly supported continuous function can be approximated by simple functions of the form 
    \[
        \sum_{k=1}^N 1_{[A_k, B_k]}(x),
    \]
    where $1_E(x)$ denotes the indicator function of the set $E$.

    \subsection{Mean Convergence}
    \label{subsec: Mean Convergence}
    
    We show that Mean Convergence holds along the sequence $\Omega(n)$.   
    
    \begin{theorem}
    \label{thm: Mean Convergence along Omega}
        Suppose $(X, \mu, T)$ is ergodic and let $f \in L^2(\mu)$. Then 
        \[
        \lim_{N \to \infty} \Big \lVert \AVG T^{\Omega(n)} f - \int_X f \, d\mu \Big \rVert_2 
        = 0.
        \]
    \end{theorem}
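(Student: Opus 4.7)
The plan is to use the spectral theorem for the unitary operator $T$ on $L^2(\mu)$. First, writing $f = \int f\,d\mu + f_0$ with $\int f_0\,d\mu = 0$, it suffices to prove the theorem in the mean-zero case and show the averages converge to $0$ in $L^2$. For such $f$, let $\nu_f$ be the spectral measure on the unit circle $\T$ associated to $f$, so that $\langle T^k f, f\rangle = \int_{\T} z^k \, d\nu_f(z)$ for every $k \in \Z$. A direct expansion using unitarity of $T$ gives
\[
    \Big\lVert \AVG T^{\Omega(n)} f \Big\rVert_2^2
    = \int_{\T} \Big| \AVG z^{\Omega(n)} \Big|^2 d\nu_f(z).
\]
By ergodicity, the only $T$-invariant $L^2$ functions are constants, so the mean-zero assumption forces $\nu_f(\{1\}) = 0$.

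Next, I would establish the pointwise claim that for every $z \in \T$ with $z \neq 1$,
\[
    \AVG z^{\Omega(n)} \longrightarrow 0 \quad \text{as } N \to \infty.
\]
This can be obtained directly from \cref{thm: Bergelson-Richter Theorem} applied to the rotation system $(K_z, R_z)$, where $K_z$ is the closure of $\{z^n : n \in \Z\}$ in $\T$ and $R_z(w) = zw$. This system is uniquely ergodic with respect to the Haar measure on the compact abelian group $K_z$, both when $z$ is a root of unity (so $K_z$ is a finite cyclic group) and when $z$ generates a dense subgroup (so $K_z = \T$). Applying \cref{thm: Bergelson-Richter Theorem} with the continuous function $g(w) = w$ and starting point $w = 1$ gives convergence to $\int_{K_z} w \, d\mu(w) = 0$, since $z \neq 1$. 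As an alternative route, one may combine \cref{thm: Pillai-Selberg} (for roots of unity) with \cref{thm: Erdos-Delange} and Weyl's criterion (for $z = e^{2\pi i \alpha}$ with $\alpha$ irrational) to get the same conclusion.

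Finally, since $\big| \AVG z^{\Omega(n)} \big|^2 \leq 1$ uniformly in $N$ and $z$, and since $\nu_f$ is a finite measure that assigns no mass to the single point $\{1\}$ where the pointwise limit might fail, the Dominated Convergence Theorem yields
\[
    \lim_{N \to \infty} \int_{\T} \Big| \AVG z^{\Omega(n)} \Big|^2 d\nu_f(z) = 0,
\]
which is the desired $L^2$ convergence. The substance of the argument is concentrated in the pointwise step, which is precisely where \cref{thm: Bergelson-Richter Theorem} (or, equivalently, the classical theorems of Pillai-Selberg and Erd\H{o}s-Delange) does the heavy lifting; once this is in hand, the spectral reduction and dominated convergence are routine.
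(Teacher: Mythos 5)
Your proposal is correct and follows essentially the same route as the paper: a spectral-theorem reduction to showing that $\frac{1}{N}\sum_{n=1}^N z^{\Omega(n)} \to 0$ for every $z \in \T$ with $z \neq 1$, which the paper handles exactly via \cref{thm: Pillai-Selberg} and \cref{thm: Erdos-Delange} with Weyl's criterion (the "alternative route" you mention), the only cosmetic difference being that you spell out the standard spectral argument and also note that the pointwise step can be deduced from \cref{thm: Bergelson-Richter Theorem} applied to group rotations.
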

    
    This statement seems to be well-known, but we were not able to find a proof, so the proof is included here for completeness. 
    
    \begin{proof}[Proof of \cref{thm: Mean Convergence along Omega}]
        By a standard argument applying the Spectral Theorem, it is enough to check that for any $\beta \in (0,1)$,
        \[
            \lim_{N \to \infty} \AVG e^{2 \pi i \beta \Omega(n)} = 0.
        \]
        
        First, suppose that $\beta \in (0,1) \setminus \Q$. By \cref{thm: Erdos-Delange}, the sequence $\{\Omega(n) \beta \}$ is uniformly distributed mod 1. Then the Weyl Equidistribution Criterion (See for instance \cite{Weyl1916} or \cite[Lemma 4.17]{EW2011}) implies that 
        \[
            \lim_{N \to \infty} \AVG e^{2 \pi i \Omega(n) \beta } = 0, 
        \]
        as desired. Now, suppose that $\beta = \frac{p}{q}$, where $p,q \in \Z$ are coprime. By \cref{thm: Pillai-Selberg}, $\Omega(n)$ distributes evenly over residue classes mod $q$. It is straightforward to check this is equivalent to the statement that
        \[
            \lim_{N \to \infty} \frac{1}{N} \sum_{n=1}^N \zeta^{\Omega(n)} = 0,
        \]
        where $\zeta$ is a primitive $q$-th root of unity. Since $\gcd(p,q) = 1$, $e^{\frac{2\pi i p}{q}}$ is a primitive $q$-th root of unity, and we are done. 
    \end{proof}

\section{Counterexamples to Convergence}   
\label{sec: Counterexamples}
    The condition of unique ergodicity is essential to the proof of \cref{thm: Bergelson-Richter Theorem}. In this section, we show that, removing this assumption, convergence need not hold for an arbitrary generic point and pointwise almost-everywhere convergence does not hold in any non-atomic ergodic system.

    \subsection{Counterexample for Generic Points}
    \label{subsec: Generic Points}
    
    We show that convergence need not hold for generic points: 
    
    \begin{prop}
    \label{prop: Generic Points}
        There exists an ergodic system $(X, \mu, T)$, a generic point $x \in X$ for the measure $\mu$, and a continuous function $F \in C(X)$ such that the averages 
        \[
            \AVG F(T^{\Omega(n)} x) 
        \]
        do not converge. 
    \end{prop}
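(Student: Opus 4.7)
The plan is to build a symbolic example: a shift space $X \subseteq \{0,1\}^{\Z}$, ergodic measure the point mass at the all-zero sequence $\mathbf{0}$, and a generic point $x$ engineered so that its $1$'s are placed in blocks where $\Omega(n)$ is concentrated for every other scale in a sequence $N_j \to \infty$. Concretely, choose $N_j$ with $\log \log N_j = 2^j$ and set
$W_j = \{k \in \N : |k - \log \log N_j| \leq \sqrt{\log \log N_j}\}$.
Define $x \in \{0,1\}^{\Z}$ by $x_k = 1$ when $k \in \bigcup_{j \text{ odd}} W_j$ and $x_k = 0$ otherwise (including for $k < 0$). Let $T = \sigma$ be the left shift and let $X$ be the orbit closure of $x$.

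First I would verify that $x$ is generic for $\mu = \delta_{\mathbf{0}}$, noting that $\mu$ is ergodic since $\mathbf{0}$ is a fixed point of $\sigma$. The widths $|W_j| \sim 2^{j/2+1}$ grow much more slowly than the separation between consecutive windows, so the set $\{k : x_k = 1\}$ has density $0$ in $\N$. Consequently, for any cylinder set $C \subseteq X$ not containing $\mathbf{0}$, the density $\frac{1}{N}|\{n \leq N : \sigma^n x \in C\}| \to 0$; by a Stone-Weierstrass approximation this gives $\frac{1}{N}\sum_{n=1}^N F(\sigma^n x) \to F(\mathbf{0}) = \int F \, d\mu$ for every $F \in C(X)$.

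Next, I take $F(y) = y_0$, so that $F(\sigma^{\Omega(n)} x) = x_{\Omega(n)}$, and I evaluate the averages along $N = N_j$ in two cases. For $j$ odd, since $W_j$ lies inside the support of $x$, an application of \cref{thm: Erdos-Kac} to a continuous approximation of $1_{[-1,1]}$ forces $\frac{1}{N_j}\sum_{n=1}^{N_j} x_{\Omega(n)} \geq \frac{1}{\sqrt{2\pi}} \int_{-1}^{1} e^{-t^2/2}\,dt - o(1) > 1/2$. For $j$ even, $W_j$ is disjoint from the support of $x$, and the only possible contributions come from windows $W_i$ with $i \neq j$; each such window sits at distance at least $\tfrac{1}{2} \log \log N_j$ from $\log \log N_j$, i.e., on the order of $\sqrt{\log \log N_j}$ standard deviations away, so by \cref{thm: Hardy Ramanujan} the fraction of $n \leq N_j$ with $\Omega(n)$ landing in any such $W_i$ is $o(1)$.

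The main obstacle is the even-$j$ bound: one needs a uniform estimate on the number of $n \leq N_j$ with $\Omega(n)$ in \emph{any} window $W_i$ far from the concentration point. The doubly exponential growth $\log \log N_j = 2^j$ is chosen precisely so that every such $W_i$ is many standard deviations away from the mean, so a single Chebyshev-type bound extracted from \cref{thm: Hardy Ramanujan}, applied to the union of all $W_i$ with $i \neq j$, suffices to control the total contribution and yield oscillation of the averages between a value greater than $1/2$ and a value tending to $0$.
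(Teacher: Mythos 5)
Your proposal is correct and follows essentially the same route as the paper's proof: a $\{0,1\}$ symbolic system with ergodic measure $\delta_{\mathbf{0}}$, a generic point whose $1$'s occupy sparse windows centred at doubly exponential scales $\log\log N_j$, the coordinate function $F(y)=y_0$, and oscillation of the averages between scales centred in a window and scales centred in a gap. The only notable difference is that your windows are one standard deviation wide, so your lower bound along odd scales invokes the Erd\H{o}s--Kac theorem (giving roughly $0.68$), whereas the paper takes much wider windows $[3^k-2^k,3^k+2^k]$ so that the Hardy--Ramanujan theorem alone pushes the averages near $1$ there; both variants work.
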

    
    We explicitly construct a symbolic system, generic point, and continuous function for which the above averages do not converge.  
    
    \begin{proof}
    	Let $(X, \sigma)$ be the one-sided shift system on the alphabet $\{0,1\}$ and let $\delta_{\mathbf{0}}$ denote the delta mass at $\mathbf{0} = (.00 ...) \in X$. Notice that $\delta_{\mathbf{0}}$ is $\sigma$-invariant and trivially ergodic. Define a sequence $\mathbf{a} \in X$ by
    	\[ 
    	    a(n) 
    	    = 
    	    \begin{cases}
        		1, & n \in [3^k - 2^k, 3^k + 2^k] \text{ for some } k \in \N
        		\\
        		0, & \text{else}
        		
    	    \end{cases}
    	    .
    	\]
    	
    	We claim that \( \mathbf{a} \) is generic for $\delta_{\mathbf{0}}$, meaning that for any $f \in C(X)$, 
    	\[
            \lim_{N \to \infty} \AVG f(\sigma^n \mathbf{a}) 
            =
            \int_X f \, d(\delta_{\mathbf{0}}) 
            = 
            f(\mathbf{0}).
        \]
        Fix $\epsilon > 0$. For $N \in \N$, define 
    	\[
    	    A_N := \{n \leq N : |f(\sigma^n \mathbf{a}) - f(\mathbf{0})| > \epsilon\}
    	\]
    	and
    	\[
    	    B_N := \{n  \leq N: |f(\sigma^n \mathbf{a}) - f(\mathbf{0})| \leq \epsilon\}. 
    	\] 
    	Then for each $N$, 
    	\begin{align*}
    	    \Big|\AVG f(\sigma^n \mathbf{a}) - f(\mathbf{0})\Big| 
    	    &\leq 
    	    \AVG |f(\sigma^n \mathbf{a})-f(\mathbf{0})|
    	    \\
    	    &= 
    	    \frac{1}{N} \sum_{n \in A_N} |f(\sigma^n \mathbf{a}) - f(\mathbf{0})| + \frac{1}{N} \sum_{n \in B_N} |f(\sigma^n \mathbf{a}) - f(\mathbf{0})|.
    	\end{align*} 
    	 
    	It is immediate from the definition of $B_N$ that 
    	\begin{equation}
    	\label{eqn: Bound for sum over BN}
    	    \frac{1}{N} \sum_{n \in B_N} |f(\sigma^n \mathbf{a}) - f(\mathbf{0})| \leq \epsilon.
    	\end{equation}
    	  
    	We now consider the sum over $A_N$. Let $M > 0$ be a bound for $|f|$. Since $f$ is continuous, there is some $\delta > 0$ such that \( d(\sigma^n \mathbf{a}, \mathbf{0}) \leq \delta \) implies $|f(\sigma^n \mathbf{a}) - f(\mathbf{0})| \leq \epsilon$. Define $C_N \sse \N$ by
    	\[
    	    C_N := \{n \leq N\,:\, d(\sigma^n \mathbf{a}, \mathbf{0}) > \delta\}.
    	\]
    	Notice that $A_N \sse C_N$ for all $N$.  Then
    	\begin{equation}
    	\label{eqn: Bound for sum over AN}
    	    \frac{1}{N} \sum_{n \in A_N} |f(\sigma^n \mathbf{a}) - f(\mathbf{0})|
    	    \leq 
    	    \frac{2M |A_N|}{N}
    	    \leq 
    	    \frac{2M|C_N|}{N}.
    	\end{equation}
    	Let $m \in \N$ be the smallest integer such that $2^{-{m+1}} \leq \delta$. Then 
    	\begin{equation}
    	\label{eqn: Bound for size of CN}
    	    |C_N| 
    	    \leq 
    	    \sum_{\{k: 3^{k-1} \leq N\}} (2^{k+1}+m) 
    	    \leq 
    	    (\log\log\log N +1)(2^{\log \log \log N+1} +m).
    	\end{equation}
    	Combining Equations \eqref{eqn: Bound for sum over AN} and \eqref{eqn: Bound for size of CN}, we obtain 
    	\[
    	    \frac{1}{N} \sum_{n \in A_N} |f(\sigma^n \mathbf{a}) - f(\mathbf{0})| \leq \epsilon
    	\]
    	for large enough $N$. Combining the estimates from Equations \eqref{eqn: Bound for sum over BN} and \eqref{eqn: Bound for sum over AN}, and letting $\epsilon \to 0$, this completes the proof of the claim. 
    	
    	Now, define $F: X \to \R$ by $F(\mathbf{x}) = \mathbf{x}(0)$, so that $a(n) = F(\sigma^n \mathbf{a})$. Since \( \mathbf{a} \) is generic for the measure $\delta_{\mathbf{0}}$, 
    	\[
    	    \lim_{N \to \infty} \AVG a(n) 
    	        = 
    	    \lim_{N \to \infty} \AVG F(\sigma^n \mathbf{a}) 
    	        = 
    	    F(\mathbf{0}) 
    	        = 
    	    0.
    	 \]
    	Define a subsequence $\{N_k\}_{k \in \N} \sse \N$ by $\log \log N_k = 3^k$. 
    	We first estimate the sum 
    	\[
    	    \frac{1}{N_k} \sum_{n=1}^{N_k} a(\log\log n)
    	\]
    	for fixed $k$. Let $I_k = [3^k-2^k, 3^k+2^k]$. It is easy to check that for $n \leq N_k$, $\log \log n$ lands in the interval $I_k$ when $n \geq N_k^{1/(2^{2^k})}$.  
    	Then 
    	\[
    	    |\{ n \leq N_k: \log \log n \in I_k\}| = N_k - \lceil N_k^{1/(2^{2^k})} \rceil.
    	\]
    	Since $a(n) = 1$ on $I_k$,
    	\[
    	    \frac{N_k - \lceil N_k^{1/(2^{2^k})}\rceil }{N_k}
    	        \leq 
    	    \frac{1}{N_k} \sum_{n=1}^{N_k}  \, a(\log\log n)
    	        \leq 
    	    1.
    	\]
    	Hence 
    	\begin{equation}
    	\label{eqn: Avgs along loglog n}
    	    \lim_{k \to \infty} \frac{1}{N_k} \sum_{n=1}^{N_k} a(\log \log n)
    	    =
    	    1.
    	\end{equation}
    	
    	We now use \cref{thm: Hardy Ramanujan} to replace \( \log \log n \) by \( \Omega(n) \) in Equation \eqref{eqn: Avgs along loglog n}. Let \( \epsilon > 0 \) and let \( C > 0 \) be that guaranteed by \cref{thm: Hardy Ramanujan}. Set 
    	\[
    	    G_{C}(N) := \{ n \leq N \,:\, |\Omega(n) - \log \log n| \geq C \sqrt{\log \log N} \}, 
    	\] 
    	so that \( |G_C(N)| = g_C(N) \). Define 
    	\[
    	    I_k' := [3^k - 2^k + C \sqrt{3^k} ~,~ 3^k + 2^k - C \sqrt{3^k}].
    	\]  
    	Then 
    	\begin{align*}
    	    \frac{1}{N_k} \sum_{n=1}^{N_k} a(\Omega(n)) 
    	    \, &\geq \, 
    	    \frac{1}{N_k} \sum_{\log \log n \in I_k'} a(\log \log n) 
    	    - 
    	    \frac{1}{N_k} \sum_{\underset{n \in G_C(N_k)}{\log \log n \in I_k'}} a(\log \log n)
    	    \\
    	    &\geq 
    	    \frac{|\{ n \leq N_k : \log \log n \in I_k'\} |}{N_k} - \frac{g_C(N)}{N_k}.
    	\end{align*}
    	One can directly calculate:
    	\[
    	    |\{ n \leq N_k : \log \log n \in I_K'\} | = N_k - \lceil N_k^{2^{(-2k + C \sqrt{3^k})}} \rceil,
    	\]
    	so that 
    	\[
    	   \limsup_{k \to \infty} \frac{1}{N_k} \sum_{n=1}^{N_k} a(\Omega(n)) 
    	   \geq 
    	   \limsup_{k \to \infty} \Big[ \frac{N_k - \lceil N_k^{2^{(-2k + C \sqrt{3^k})}} \rceil}{N_k} - \frac{g_C(N)}{N_k} \Big] 
    	   \geq 
    	   1 - \epsilon,
    	\]
    	where the last inequality follows from \cref{thm: Hardy Ramanujan}. This indicates that the averages along $\Omega(n)$ either converge to 1 or do not converge at all. However, consider the subsequence \( \{M_k\}_{k \in \N} \) given by \( \log \log M_k = 2(3^k - 2^{k-1}) \), so that \( \log \log M_k \) lands in the middle of the $k$-th interval of zero's in the definition of $\mathbf{a}$. Then by an analogous argument,  
    	\[
    	    \limsup_{k \to \infty} \frac{1}{M_k}\sum_{n=1}^{M_k} a(\Omega(n)) \leq \epsilon. 
    	\]
    	Hence the averages along $\Omega(n)$ do not converge.
    \end{proof}
    
    \cref{prop: Generic Points} tells us that convergence of Birkhoff averages is not enough to guarantee convergence of Birkhoff averages along $\Omega(n)$. However, given a stronger assumption on the convergence of the standard Birkhoff averages, convergence along $\Omega(n)$ does hold. In fact, this follows from a more general result: 
    
    \begin{prop}
        Suppose $a: \N \to \C$ is a bounded arithmetic function and $\AVG a(n)$ converges to zero uniformly. Then $\lim_{N \to \infty} \AVG a(\Omega(n)) = 0$.
    \end{prop}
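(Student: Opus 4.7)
The strategy is to leverage a ``shift-invariance'' property of the Ces\`aro averages of $a \circ \Omega$, which is the unweighted analogue of the invariance property \eqref{eqn: Invariance Property} already derived in the introduction.

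\emph{Step 1 (shift invariance).} The first goal is to show
\[
\AVG a(\Omega(n)) \;=\; \AVG a(\Omega(n)+1) + o(1) \qquad (N \to \infty).
\]
Following the approach outlined in \cref{sec: EK + Thm A}, I would interpose the double average $\frac{1}{\pi(P)}\sum_{p \leq P} \AVG a(\Omega(pn))$ for a slowly growing parameter $P = P(N)$. Since $\Omega$ is completely additive, $\Omega(pn) = \Omega(n)+1$ for every prime $p$, so this double average equals $\AVG a(\Omega(n)+1)$ identically. On the other hand, a Tur\'an--Kubilius-style argument (replacing $n$ by $n/p$ for a typical small prime factor $p \mid n$) shows that the same double average also approximates $\AVG a(\Omega(n))$ within $o(1)$. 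This is precisely the unweighted version of \eqref{eqn: Invariance Property} — the Erd\H{o}s--Kac weight $F(\varphi(n))$ is replaced by the constant~$1$ — and the same manipulation carries through. Iterating, I obtain for every fixed $j \in \N$
\[
\AVG a(\Omega(n)) \;=\; \AVG a(\Omega(n)+j) + o_j(1),
\]
where $o_j(1) \to 0$ as $N \to \infty$ for each fixed $j$ (applying Step~1 to the shifted sequence $a(\cdot + \ell)$ at each stage of the iteration).

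\emph{Step 2 (smoothing).} Averaging the identity above over $j = 1, \dots, J$ and exchanging the two finite sums gives
\[
\AVG a(\Omega(n)) \;=\; \AVG \Bigl(\frac{1}{J}\sum_{j=1}^J a(\Omega(n)+j)\Bigr) \;+\; o_J(1),
\]
where now $o_J(1) \to 0$ as $N \to \infty$ for each fixed $J$. Interpreting the hypothesis ``$\AVG a(n)$ converges to zero uniformly'' as $\sup_{K \in \N}\bigl|\frac{1}{J}\sum_{j=1}^J a(K+j)\bigr| \to 0$ as $J \to \infty$, for each $\varepsilon > 0$ I may choose $J$ so large that the bracketed inner average is bounded by $\varepsilon$ in absolute value for every value of $\Omega(n)$. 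This yields $|\AVG a(\Omega(n))| \leq \varepsilon + o_J(1)$; sending first $N \to \infty$ and then $\varepsilon \to 0$ completes the proof.

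\emph{Main obstacle.} The only non-routine input is Step~1. Because the author's weighted version \eqref{eqn: Invariance Property} sets the template, I expect Step~1 to be a direct adaptation, with the Tur\'an--Kubilius estimate doing the heavy lifting to control the discrepancy between $\AVG a(\Omega(n))$ and its prime-dilation double average uniformly in the bounded sequence $a$; Steps~2 and 3 are then a standard van der Corput-type reduction.
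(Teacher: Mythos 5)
Your overall skeleton coincides with the paper's. The paper obtains your Step~1 not by re-proving it but by citing \cite[Theorem 1.1]{Richter2021}, which is exactly the statement that $\frac1N\sum_{n\le N}a(\Omega(n))-\frac1N\sum_{n\le N}a(\Omega(n)+k)\to 0$ for each fixed $k$; its remaining argument is your Step~2 verbatim: average the shifts over $k\le K$, exchange the two finite sums to get the inner block average $\frac1K\sum_{k=\Omega(n)}^{\Omega(n)+K}a(k)$, and invoke the uniformity hypothesis $\sup_{M}\bigl|\frac1K\sum_{k=M}^{M+K}a(k)\bigr|\le\epsilon$ for $K$ large. So Step~2 is correct and matches the paper, and the iteration from shift-by-one to shift-by-$j$ is harmless since shifted sequences remain bounded.

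The gap is in the mechanism you describe for Step~1. Your double average $\frac{1}{\pi(P)}\sum_{p\le P}\frac1N\sum_{n\le N}a(\Omega(pn))$ does equal $\frac1N\sum_{n\le N}a(\Omega(n)+1)$ exactly, but the claimed Tur\'an--Kubilius comparison of this object with $\frac1N\sum_{n\le N}a(\Omega(n))$ does not close as stated. First, the substitution behind Tur\'an--Kubilius ($1\approx p\,1_{p|n}$ on average over $p$, then $m=pn$) produces inner Ces\`aro averages over the $p$-dependent ranges $[N/p]$, namely $\ECC{m}{[N/p]}a(\Omega(m)+1)$, not over $[N]$ as in your double average; and Ces\`aro averages of a bounded sequence are \emph{not} asymptotically invariant under rescaling the range (this is exactly what \cref{lem: Logarithmic Averages Trick} supplies for logarithmic averages in the proof of \cref{thm: Weak PNT}, and what is unavailable here). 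This is the difficulty the paper flags before \cref{prop: Two sets Lemma}, resolved there by the two-set trick: a set $B_1\subset\P$ and a set $B_2\subset\P_2$ matched on $\rho$-adic intervals, so the two expressions with $p$-dependent ranges are compared directly against each other (property (iv)) and $[N/p]$ is never replaced by $[N]$. Second, uniform (Ces\`aro) weighting over the primes $p\le P$ ruins the Tur\'an--Kubilius bound: with $\Phi(m,n)=\gcd(m,n)-1$ the relevant variance term is $\frac{1}{\pi(P)^2}\sum_{p,q\le P}\Phi(p,q)=\frac{1}{\pi(P)^2}\sum_{p\le P}(p-1)\asymp\log P\to\infty$, so one must weight the primes logarithmically (weights proportional to $1/p$), as in \cref{prop: Cor of Turan-Kubilius}. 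Your fallback remark that ``the same manipulation as \eqref{eqn: Invariance Property} carries through'' is indeed the right route: the proof of \cref{prop: Invariance Property} uses only that $F$ is bounded and that $F(\varphi(pn))-F(\varphi(n))\to 0$, both trivially true for a constant weight, so running it with $F\equiv 1$ gives your Step~1; alternatively one cites \cite[Theorem 1.1]{Richter2021} directly, as the paper does. But the single-set, uniformly weighted double average you actually write down is not that manipulation and would not prove the shift invariance.
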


    \begin{proof}
        It follows from \cite[Theorem 1.1]{Richter2021} that for any fixed $k \in \N$,
        \[
            \lim_{N \to \infty} \Big| \AVG a(\Omega(n)) - \AVG a(\Omega(n)+k) \Big| 
            = 
            0.
        \]
        Hence
        \begin{equation*}
        \label{eqn: Shifted Averages along Omega}
            \lim_{K \to \infty} \lim_{N \to \infty} \AVGK \AVG a(\Omega(n) + k) 
            =
            \lim_{N \to \infty} \AVG a(\Omega(n)),
        \end{equation*}
        assuming the limits exist. Let \( \epsilon > 0 \). Since the Ces\'aro averages of \( a \) tend to zero uniformly, there is some \( K_0 \in \N \) such that for \( K \geq K_0 \), 
        \[
            \sup_{M > 0} \Big| \frac{1}{K} \sum_{k=M}^{M+K} a(k) \Big| \leq \epsilon.
        \]
        Then for any fixed \( K \geq K_0 \), 
        \[
            \lim_{N \to \infty} \Big| \AVGK \AVG a(\Omega(n) + k) \Big| = \lim_{N \to \infty} \Big| \AVG \Big( \frac{1}{K} \sum_{k = \Omega(n)}^{\Omega(n) + K} a(k)\Big) \Big|  \leq \epsilon. 
        \]
        Hence 
        \[
            \lim_{K \to \infty} \lim_{N \to \infty} \AVG \Big( \frac{1}{K} \sum_{k = \Omega(n)}^{\Omega(n) + K} a(k)\Big) \leq \epsilon.
        \]
        for all \( \epsilon > 0 \) and we are done. 
    \end{proof}

    \begin{corollary}
        Suppose $(X, \mu, T)$ is ergodic and suppose $\AVG f(T^n x) \to 0$ uniformly. Then 
        \[
            \lim_{N \to \infty} \AVG f(T^{\Omega(n)}x) = 0.
        \]
    \end{corollary}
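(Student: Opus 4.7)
The plan is to deduce this corollary as an immediate consequence of the preceding proposition applied to the arithmetic function $a(n) := f(T^n x)$ for fixed $x \in X$. The only real content is to verify that the hypotheses of the proposition hold for this choice of $a$.

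First I would fix $x \in X$ and define $a: \N \to \C$ by $a(n) := f(T^n x)$. Since $f$ is bounded (implicit in the statement, as $\AVG f(T^n x)$ needs to make sense and the hypothesis is about convergence of these averages), the sequence $a$ is bounded. Next I would unpack the hypothesis that $\AVG f(T^n x) \to 0$ uniformly: interpreting ``uniformly'' in the same sense as in the preceding proposition (uniformly in the starting point of the Ces\`aro window), this means
\[
    \sup_{M \geq 0} \Big| \frac{1}{N} \sum_{n=M+1}^{M+N} f(T^n x) \Big| \longrightarrow 0 \quad \text{as } N \to \infty.
\]
Since $\frac{1}{N}\sum_{n=M+1}^{M+N} a(n) = \frac{1}{N}\sum_{n=M+1}^{M+N} f(T^n x)$, this is exactly the assertion that the Ces\`aro averages of $a$ tend to zero uniformly in the starting point, which is the hypothesis needed for the preceding proposition. (If instead ``uniformly'' in the corollary is read as uniformly in $x \in X$, then replacing $x$ by $T^M x$ gives uniformity in $M$ for free, by the invariance of the orbit structure.)

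With both hypotheses verified, I would invoke the preceding proposition to conclude
\[
    \lim_{N \to \infty} \AVG a(\Omega(n)) = 0,
\]
which, by the definition of $a$, is precisely $\lim_{N \to \infty} \AVG f(T^{\Omega(n)} x) = 0$. There is no substantive obstacle: the entire argument is essentially a translation, with the only subtlety being the bookkeeping around the meaning of ``uniformly,'' and the heavy lifting (the invariance shift through Richter's theorem) has already been done in the proof of the proposition.
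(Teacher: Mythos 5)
Your proof is correct and matches the paper's (implicit) argument exactly: the paper offers no separate proof of the corollary, treating it precisely as the specialization of the preceding proposition to $a(n) = f(T^n x)$, which is what you do. Your remark reconciling the two readings of ``uniformly'' (uniformly in the starting point $M$ versus uniformly in $x$, via replacing $x$ by $T^M x$) is a reasonable clarification of a point the paper leaves unstated, and the boundedness assumption on $f$ you flag is likewise implicitly needed in the paper's formulation.
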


    \subsection{A Transition to Weighted Sums}
    \label{subsec: Weighted Sums}
        From this point on, we denote 
        \[
            \log_m N := \underbrace{\log \log \dots \log}_{m \text{ times}} N .
        \]  
        Most commonly, we take $m = 2,3$. To study pointwise convergence without the condition of unique ergodicity, we first introduce a different formulation of the ergodic averages along $\Omega(n)$. Let $a: \N \to \C$ be a bounded arithmetic function. We show that there are weight functions $w_k(N)$ so that 
        	\[
        		\AVG a(\Omega(n)) = \sum_{k \geq 0} w_k(N) a(k).
        	\]
        Regrouping the terms by the value of $\Omega(n)$ yields an exact formulation for these weights. Let $\pi_k(N)$ denote the number of integers not exceeding $N$ with exactly $k$ prime factors, counted with multiplicity. Then
        	\[
        		\AVG a(\Omega(n)) = \sum_{k \geq 0} \frac{\pi_k(N)}{N} ~ a(k)
        	\]	
        so that $w_k(N) = \pi_k(N)/N$. However, this exact formulation does not give much insight into the shape of these weight functions. Instead, we rely on an estimate of Erd\H{o}s to show that on large intervals, the weight functions \( w_k (N) \) can be approximated (uniformly in $k$) by a Gaussian with mean and variance \(\log_2 N \):
        
        \begin{lemma}
        \label{lem: Erdos to Gaussian}
            \[
                \frac{\pi_{k}(N)}{N} 
                = 
                \frac{1}{\sqrt{2\pi \log_2 N}} e^{-\frac{1}{2} \big(\frac{k - \log_2 N}{\sqrt{\log_2 N}} \big)^2} (1+ o(1)).
            \]
        \end{lemma}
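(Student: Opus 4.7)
The plan is to start from a classical estimate of Erd\H{o}s (or the Sathe--Selberg refinement of Landau's asymptotic) for $\pi_k(N)$, and then show by direct calculation, using Stirling's formula, that this asymptotic takes Gaussian shape in the regime where $k$ is close to $\log_2 N$. Specifically, the point of departure is the estimate
\[
    \pi_k(N) = \frac{N}{\log N} \cdot \frac{(\log_2 N)^{k-1}}{(k-1)!}\bigl(1 + o(1)\bigr),
\]
which is known to hold uniformly for $k$ in any range of the form $|k - \log_2 N| \leq C \sqrt{\log_2 N}$. Outside this range the Gaussian on the right of the claimed formula is negligible (by the Hardy--Ramanujan Theorem, \cref{thm: Hardy Ramanujan}), so it suffices to verify the Gaussian shape inside this window.

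Setting $L := \log_2 N$ and writing $k - 1 = L + s$ with $s = O(\sqrt{L})$, I would apply Stirling in the form $(k-1)! = \sqrt{2\pi(k-1)}\bigl((k-1)/e\bigr)^{k-1}(1+o(1))$ to obtain
\[
    \frac{L^{k-1}}{(k-1)!} = \frac{1}{\sqrt{2\pi(k-1)}}\left( \frac{eL}{k-1}\right)^{k-1}(1+o(1)).
\]
Because $k - 1 = L + O(\sqrt{L})$, the prefactor $\sqrt{2\pi(k-1)}$ may be replaced by $\sqrt{2\pi L}$ with error $1+o(1)$. The main computation is then to expand the logarithm of the power $\bigl(eL/(L+s)\bigr)^{L+s}$: using $\log(1+s/L) = s/L - s^2/(2L^2) + O(s^3/L^3)$ gives
\[
    -(L+s)\log\!\left(1 + \frac{s}{L}\right) = -s - \frac{s^2}{2L} + O\!\left(\frac{s^3}{L^2}\right),
\]
so that $(eL/(L+s))^{L+s} = e^{L - s^2/(2L)}(1+o(1))$, valid uniformly for $s = o(L^{2/3})$ and in particular for $s = O(\sqrt{L})$.

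Combining these pieces yields
\[
    \frac{L^{k-1}}{(k-1)!} = \frac{e^{L}}{\sqrt{2\pi L}} e^{-s^2/(2L)}\bigl(1+o(1)\bigr),
\]
and since $\log N = e^{L}$, the factor $\log N$ in the denominator of Erd\H{o}s's estimate cancels the $e^{L}$ exactly. Finally, replacing $s = k-1-L$ by $k - L$ changes the exponent by $O((k-L)/L) = O(1/\sqrt{L})$, which is absorbed into the $1+o(1)$ factor. This gives
\[
    \frac{\pi_k(N)}{N} = \frac{1}{\sqrt{2\pi \log_2 N}}\, e^{-\frac{1}{2}\left(\frac{k - \log_2 N}{\sqrt{\log_2 N}}\right)^2}(1+o(1)),
\]
as claimed.

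The main technical obstacle I anticipate is controlling the uniformity of the Erd\H{o}s/Sathe estimate for $\pi_k(N)$ across the window $|k-\log_2 N|\leq C\sqrt{\log_2 N}$, and ensuring that the error terms in Stirling's expansion combine with the error in the $\pi_k(N)$ estimate to produce a single $1+o(1)$ factor uniform in $k$. The Taylor expansion of the logarithm is routine, but one needs to check carefully that the quadratic term is the dominant correction in the relevant range and that the cubic remainder is swallowed by the $1+o(1)$. Everything outside the Gaussian window is handled by the Hardy--Ramanujan bound, so the result reduces to this uniform asymptotic analysis.
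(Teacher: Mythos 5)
Your proposal is correct and follows essentially the same route as the paper's proof: start from Erd\H{o}s's estimate $\pi_k(N)/N \sim \frac{1}{\log N}\frac{(\log_2 N)^{k-1}}{(k-1)!}$, uniform for $k$ in the Hardy--Ramanujan window, apply Stirling's formula, and expand the logarithm to quadratic order to extract the Gaussian factor $e^{-s^2/(2\log_2 N)}$. Your bookkeeping of the quadratic term is in fact cleaner than the paper's intermediate display (which has a stray factor of $2$ before being exponentiated correctly), and like the paper you should regard the asymptotic as asserted only for $k$ in the window $|k-\log_2 N|\leq C\sqrt{\log_2 N}$, since that is all that is proved and all that is used.
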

        
        \begin{remark}
            This can be shown directly in terms of probability theory. Erd\H{o}s's estimate can be viewed as saying that $\pi_m(N)/N$ is approximated by a Poisson distribution with parameter $\log_2 N$. Since this parameter tends to infinity with $N$, for large values of $N$, we can approximate this Poisson distribution by a Gaussian distribution with mean and variance $\log_2 N$. However, since we do not take a probabilistic viewpoint in this paper, the computation is included for completeness. 
        \end{remark} 
        
        \begin{proof}[Proof of \cref{lem: Erdos to Gaussian}]
            Let \( C > 0\) be that given by \cref{thm: Hardy Ramanujan} and set 
            \[
                I_N = [\log_2 N - C \sqrt{\log_2 N} ~,~ \log_2 N + C \sqrt{\log_2 N}].
            \]
            By a result of Erd\H{o}s \cite[Theorem II]{Erdos1948},
            \[
                \frac{\pi_k(N)}{N} = \frac{1}{\log N} \frac{(\log_2 N)^{k-1}}{(k-1)!} + o(1),
            \]
            and this estimate is uniform for \( k \in I_N \). Applying Stirling's formula, 
            \begin{equation}
            \label{eqn: Stirling}
                \frac{1}{\log N} \frac{(\log_2 N)^{k-1}}{(k-1)!}
                = 
                \frac{1}{\log N} \frac{(\log_2 N)^{k-1} e^{k-1}}{(k-1)^{k-1} \sqrt{2 \pi (k-1)}} ~ (1 + o(1)).
            \end{equation}
            We now rewrite $k$ in the following form:
            \[
                k = \log_2 N + A \sqrt{\log_2 N},
            \]
            for some $A \in \R$. Taking $\log$ of Equation \eqref{eqn: Stirling} and evaluating at such values of $k$, 
                
            \begin{align*}
                \log \Big( \frac{1}{\log N} \cdot \frac{(\log_2 N)^{k-1}}{(k-1)!} \Big) 
                &=
                - \Big(\log_2 N + A \sqrt{\log_2 N} - \frac{1}{2}\Big) \log\Big( 1+ \frac{A \sqrt{\log_2 N} - 1}{\log_2 N} \Big) 
                \\
                & \quad \quad \quad \quad 
                - \frac{1}{2} \log_3 N + A \sqrt{\log_2 N} - 1 - \frac{1}{2} \log(2 \pi) + o(1).
            \end{align*}
            Next, using the quadratic approximation $\log(1+\epsilon) = \epsilon - \epsilon^2/2 + O(\epsilon^3)$ for $\epsilon < 1$, we obtain:  
            \begin{align*}
                \log \Big( \frac{1}{\log N} \cdot &\frac{(\log_2 N)^{k-1}}{(k-1)!} \Big) 
                \\
                &=
                - \Big(\log_2 N + A \sqrt{\log_2 N} - \frac{1}{2}\Big) \Big( \frac{A \sqrt{\log_2 N} - 1}{\log_2 N} - \frac{(A \sqrt{\log_2 N} - 1)^2}{2 (\log_2 N)^2 } \Big) 
                \\
                & \quad \quad \quad \quad 
                - \frac{1}{2} \log_3 N + A \sqrt{\log_2 N} - 1 - \frac{1}{2} \log(2 \pi) + o(1).
                \\
                &= 
                - A^2 - \frac{1}{2} \log_3 N - \frac{1}{2} \log(2\pi) + o(1).
            \end{align*}
            Exponentiating, we obtain the following estimate for $\pi_k(N)/N$:
                \[
                    \frac{\pi_{k}(N)}{N} 
                    = 
                    \frac{1}{\log N} \frac{(\log_2 N)^{k-1}}{(k-1)!} (1 + o(1)) 
                    = 
                    \frac{1}{\sqrt{2\pi}} \frac{1}{\sqrt{\log_2 N}} e^{- \frac{1}{2} A^2} (1+ o(1)).
                \]
            Rewriting $A$ in terms of $k$, 
            \begin{equation*}
            \label{eqn: Approx by Guassian}
                \frac{\pi_{k}(N)}{N} 
                = 
                \frac{1}{\sqrt{2\pi}} \frac{1}{\sqrt{\log_2 N}} e^{-\frac{1}{2} \big(\frac{k - \log_2 N}{\sqrt{\log_2 N}} \big)^2} (1+ o(1)).
            \end{equation*}
        \end{proof}

    \subsection{Failure of Pointwise Convergence}
    \label{subsec: Proof of Pointwise}
        We conclude this section by demonstrating the failure of pointwise convergence along $\Omega(n)$ in every non-atomic, ergodic system. The strategy is to first approximate the ergodic averages using \cref{lem: Erdos to Gaussian}. We then use the Rokhlin Lemma to construct a set of small measure on which the ergodic averages along $\Omega(n)$ become large. A lemma from functional analysis then implies the failure of pointwise convergence. This lemma is proven in much greater generality in \cite[Theorem 5.4]{RosenblattWierdl1994}, but here we state it only for the averaging operators $T_N: \mathcal{B} \to L^1(\mu)$ defined by:
        \begin{equation}
        \label{eq: Def of Avg Operators}
            T_N A(x) 
            := 
            \frac{1}{\sqrt{2\pi \log_2 N}}  \sum_{k=\lceil \log_2 N - C \sqrt{\log_2 N} \rceil }^{\lfloor \log_2 N + C\sqrt{\log_2 N}\rfloor } e^{- \frac{1}{2} \big(\frac{k - \log_2 N}{\sqrt{\log_2 N}}\big)^2} 1_A(T^k x), 
        \end{equation}
        where $C > 0$ is a constant to be chosen later. 
        
        \begin{lemma}
        \label{lem: Banach}
            Let $T_N$ be defined as in \eqref{eq: Def of Avg Operators} and let $N_0 \in \N$. Assume that for all $\epsilon > 0$ and $N \geq N_0$, there is a set $A \in \mathcal{B}$ with $\mu(A) < \epsilon$ and 
            \[
            \mu \Big\{ x \in X ~:~ \sup_{n \geq N} T_n A(x) > 1 - \epsilon \Big\} \geq 1 - \epsilon.
            \]
            Then there is a dense $G_\delta$ subset $\mathcal{R} \subset \mathcal{B}$ such that for $A \in R$, 
            \[
            \limsup_{n \to \infty} T_n A(x) = 1 \text{ for a.e. } x \in X
            \]
            and
            \[
            \liminf_{n \to \infty} T_n A(x) = 0 \text{ for a.e. } x \in X.
            \]
        \end{lemma}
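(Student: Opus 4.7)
The plan is a Baire-category / Banach-principle argument on the complete metric space $(\mathcal{B}, d)$, where $d(A,B) := \mu(A \triangle B)$ (viewing $\mathcal{B}$ modulo $\mu$-null sets). For integers $j, k \geq 1$ I define
\[
    U_{j,k} := \Bigl\{ A \in \mathcal{B} : \mu\Bigl\{ x \in X : \sup_{n \geq j} T_n A(x) > 1 - 1/k \Bigr\} > 1 - 1/k \Bigr\},
\]
and plan to show that each $U_{j,k}$ is open and dense, so that $\mathcal{R}^+ := \bigcap_{j,k \geq 1} U_{j,k}$ is a dense $G_\delta$. To simultaneously capture the $\liminf$ conclusion, I would run the parallel argument for the ``dual'' operators $A \mapsto T_n \mathbf{1} - T_n A = T_n(X \setminus A)$, exploiting that $A \mapsto X \setminus A$ is an isometry of $(\mathcal{B}, d)$, and take $\mathcal{R} := \mathcal{R}^+ \cap \mathcal{R}^-$, which remains a dense $G_\delta$.

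Density is the cleaner of the two steps and uses the hypothesis directly. Given $A \in \mathcal{B}$ and $\delta > 0$, I apply the hypothesis with $\epsilon := \min(\delta, 1/(k+1))$ and $N_0 := j$ to produce $B \in \mathcal{B}$ with $\mu(B) < \epsilon$ and $\mu\{x : \sup_{n \geq j} T_n B(x) > 1 - \epsilon\} \geq 1 - \epsilon$. Setting $A' := A \cup B$ gives $d(A, A') \leq \mu(B) < \delta$, and positivity of $T_n$ yields $T_n A' \geq T_n B$ pointwise, which places $A'$ in $U_{j,k}$ with strict slack because $1 - \epsilon > 1 - 1/k$.

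Openness is where I expect the main obstacle. The pointwise inequality $|T_n A - T_n B| \leq T_n 1_{A \triangle B}$ combined with $T$-invariance of $\mu$ yields the Lipschitz bound $\|T_n A - T_n B\|_1 \leq C_* \, d(A, B)$, with $C_*$ a uniform bound on the Gaussian weight sum across $n$. For $A \in U_{j,k}$ with slack $\eta > 0$, continuity of measure furnishes $\gamma > 0$ and an $M$ large enough that
\[
    \mu\Bigl\{x : \max_{j \leq n \leq M} T_n A(x) > 1 - 1/k + \gamma\Bigr\} \geq 1 - 1/k + \eta/3.
\]
For $B$ with $d(A, B) < \delta$, applying Markov to $\sum_{n=j}^{M} |T_n A - T_n B|$ (whose $L^1$ norm is at most $(M-j+1) C_* \delta$) shows that outside a set of measure $O_{M,\gamma}(\delta)$ one has $\max_{j \leq n \leq M} T_n B > 1 - 1/k$, so $B$ remains in $U_{j,k}$ once $\delta$ is chosen small. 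The delicate point is precisely this passage from an $L^1$-Lipschitz control on each $T_n$ to a pointwise control on the maximal function, which is why truncation to a finite window plus a Markov estimate is essential.

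Once Baire applies, each $A \in \mathcal{R}^+$ satisfies the following: for every $k$, continuity of measure on the decreasing family $\{\sup_{n \geq j} T_n A > 1 - 1/k\}_{j \geq 1}$ gives that the set $\{\limsup_n T_n A \geq 1 - 1/k\}$ has measure at least $1 - 1/k$. Since these sets decrease in $k$ while their measures are bounded below by the increasing sequence $1 - 1/k$, each must have measure exactly $1$, and intersecting over $k$ yields $\limsup_n T_n A(x) = 1$ for a.e.\ $x$. The parallel conclusion on $\mathcal{R}^-$ translates into $\liminf_n T_n A(x) = 0$ a.e.\ via $T_n(X \setminus A) = T_n \mathbf{1} - T_n A$, and restricting to $\mathcal{R} = \mathcal{R}^+ \cap \mathcal{R}^-$ gives both conclusions simultaneously on a single dense $G_\delta$.
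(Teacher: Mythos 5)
Your argument is correct, but there is nothing in the paper to compare it against line by line: the paper does not prove \cref{lem: Banach} at all, deferring to \cite[Theorem 5.4]{RosenblattWierdl1994}, and your Baire-category argument is essentially the standard proof of that cited result (open and dense sets $U_{j,k}$ in the measure algebra with metric $\mu(A\triangle B)$, density obtained from the hypothesis by adjoining the small set $B$ and using positivity, openness obtained by truncating the supremum to a finite window via continuity of measure and then using the uniform $L^1$-Lipschitz bound $\lVert T_nA-T_nB\rVert_1\le C_*\,\mu(A\triangle B)$ plus Markov, and the complementation isometry to convert the $\limsup$ statement into the $\liminf$ statement). So in effect you have supplied the proof the paper omits, by the same route as the reference. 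Two small points to tighten. First, the hypothesis is stated for $N\ge N_0$ with $N_0$ fixed, so in the density step you should invoke it at $N=\max(j,N_0)$ rather than ``setting $N_0:=j$''; this is harmless because $\sup_{n\ge j}T_nB\ge\sup_{n\ge\max(j,N_0)}T_nB$. Second, both your concluding step (upgrading $\limsup_n T_nA\ge 1$ to $=1$) and the passage from $\limsup_n T_n(X\setminus A)=1$ to $\liminf_n T_nA=0$ silently use that the total weight $T_n\mathbf{1}$ of the operators in \eqref{eq: Def of Avg Operators} satisfies $\limsup_n T_n\mathbf{1}\le 1$; this is true for the truncated Gaussian weights and should be stated, since it is the only structural input about $T_n$ you need beyond positivity and $T$-invariance of $\mu$. (In fact $T_n\mathbf{1}$ converges to $\frac{1}{\sqrt{2\pi}}\int_{-C}^{C}e^{-t^2/2}\,dt$, which is strictly less than $1$ for fixed $C$; this reflects a looseness in the paper's formulation, where $C$ is really chosen in terms of $\epsilon$ in the application, but your implications from the stated hypothesis are valid as written.)
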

        
        Operators satisfying the conclusion of \cref{lem: Banach} are said to have the \textit{strong sweeping-out property}. As we demonstrate in the case of the operators $T_N$, averaging operators with strong sweeping-out property fail for pointwise convergence. 
    
        \begin{proof}[Proof of \cref{thm: Pointwise Convergence along Omega}]
            We want to find a set $A \in \mathcal{B}$ such that for almost every $x \in X$, the averaging operators
            \[
                T_N'(A) := \frac{1}{N} \sum_{n=1}^N 1_A(T^{\Omega(n)}x)
            \]
            satisfy \eqref{eqn: Pointwise Averages}. For each $N \in \N$, define
            \[
                I_N = [\log_2 N - C\sqrt{\log_2 N} ~,~ \log_2 N + C\sqrt{\log_2 N}],
            \]  
            where the constant $C$ is chosen later. Then
            \begin{align*}
                T_N' A(x)  
                &= 
                \frac{1}{N} \sum_{k \geq 0} \pi_k(N) 1_A(T^k x) 
                \\
                &= 
                \frac{1}{N} \sum_{k \in I_N} \pi_k(N) 1_A(T^k x) + \frac{1}{N} \sum_{k \notin I_N} \pi_k(N) 1_A(T^k x).
            \end{align*}
            
            Let $\epsilon \in (0,1)$. Choose $C > 0$ satisfying \cref{thm: Hardy Ramanujan}. Then
            \begin{equation}
            \label{eqn: Estimate for not in I}
                \liminf_{N \to \infty} \Big| \frac{1}{N} \sum_{k \notin I_N} 1_A(T^k x) \pi_k(N) \Big|
                \leq 
                \liminf_{N \to \infty} \frac{1}{N} \sum_{k \notin I_N} \pi_k(N) 
                =
                \liminf_{N \to \infty} \frac{g_C(N)}{N} 
                \leq 
                \epsilon.
            \end{equation}
                
            For $k \in I_N$, we approximate $\pi_k(n)/N$ using \cref{lem: Erdos to Gaussian}. Since this estimate is uniform over $k \in I_N$, there are \(\epsilon_N: \N \to \R\) such that \( \lim_{N \to \infty} \sup_{k \in I_N} |\epsilon_N(k)| = 0\) and  
            \[
                \sum_{k \in I_N} \frac{\pi_k(N)}{N} 1_A(T^k x) 
                = 
                \sum_{k \in I_N} \Big[ \frac{1}{\sqrt{2\pi \log_2 N}}  e^{- \frac{1}{2}(\frac{k- \log_2 N}{\sqrt{\log_2 N}})^2} ( 1 + \epsilon_N(k) )\Big] 1_A(T^k x).
            \]
            Since $|\epsilon_N(k)|$ tends to zero uniformly in $k$ as $N$ tends to infinity,  
            \begin{equation}
            \label{eqn: Estimate for in I}
                \sum_{k \in I_N} \frac{\pi_k(N)}{N} 1_A(T^k x)
                = \frac{1}{\sqrt{2\pi \log_2 N}} \sum_{k = \lceil \log_2 N - C \sqrt{\log_2 N}\rceil }^{\lfloor \log_2 N + C \sqrt{\log_2 N} \rfloor} e^{- \frac{1}{2}(\frac{k- \log_2 N}{\sqrt{\log_2 N}})^2} 1_A(T^k x) + o(1).
            \end{equation}
            From \cref{eqn: Estimate for not in I,eqn: Estimate for in I}, we obtain
            \begin{equation}
            \label{eqn: limsup estimates for Tn}
                \limsup_{N \to \infty} T_N A (x) 
                \leq  
                \limsup_{N \to \infty} T_N' A (x) 
                \leq 
                1,
            \end{equation}
            and
            \begin{equation}
            \label{eqn: liminf estimates for Tn}
                0 
                \leq 
                \liminf_{N \to \infty} T_N' A (x)
                = 
                \liminf_{N \to \infty} T_N A (x) + \epsilon.
            \end{equation}
            
            The upper bound from \cref{eqn: limsup estimates for Tn} and the lower bound from \cref{eqn: liminf estimates for Tn} are trivial. Now, fix \(N_0 \in \N \) such that for all \(M \geq N_0\),
            \[
                \Bigg| \Bigg( \frac{1}{\sqrt{2\pi \log_2 M}}  \sum_{k=\lceil \log_2 M - C \sqrt{\log_2 M}\rceil }^{\lfloor \log_2 M + C \sqrt{\log_2 M}\rfloor } e^{-\big(\frac{k - \log_2 M}{\sqrt{\log_2 M}}\big)^2} \Bigg)
                - 
                1 \Bigg| 
                < 
                \epsilon .
            \] 
            Let $N \geq M \geq N_0$. By the Rokhlin Lemma, there is a set $E$ such that the sets $T^k E$ are pairwise disjoint for $k = 0, \dots, \lfloor \log_2 N + C \sqrt{\log_2 N} \rfloor $ and 
            \[
                1 - \epsilon \leq \mu \Bigg( \bigcup_{k=0}^{\lfloor\log_2 N + C \sqrt{\log_2 N}\rfloor } T^k E \Bigg) \leq 1.
            \]
            Note the upper bound is trivial. Using the disjointness condition, we obtain bounds for the measure of $E$: 
            \begin{equation}
            \label{eqn: Measure of E estimates}
                \frac{1 - \epsilon}{\lfloor \log_2 N + C\sqrt{\log_2 N}\rfloor } 
                \leq 
                \mu(E) 
                \leq  \frac{1}{\lfloor \log_2 N + C\sqrt{\log_2 N} \rfloor}.
            \end{equation}
            
            Define $A_N := \bigcup_{k = \lceil \log_2 N - C\sqrt{\log_2 N}\rceil }^{\lfloor \log_2 N + C\sqrt{\log_2 N}\rfloor } T^k E$. Using the upper bound given in Equation \eqref{eqn: Measure of E estimates},
            \[
                \mu(A_N) 
                    \leq  
                2 \sqrt{\log_2 N } \, \mu(E) 
                    \leq
                \frac{2 \sqrt{\log_2 N}}{\lfloor \log_2 N + C\sqrt{\log_2 N} \rfloor}.
            \]
            Hence the measure of $A_N$ tends to zero as $N$ tends to infinity, so that, for large $N$, $A_N$ satisfies the first condition of \cref{lem: Banach}. Now, we show that there are sets $B_N$, $N \in \N$, such that $\mu(B_N) \to 1 - \epsilon$ as $N \to \infty$ and 
            \[
                \mu\Big\{ \sup_{M \leq k \leq N} T_k A(x) \geq 1 - \epsilon \Big\} \geq \mu(B_N).
            \]
            Let $j \in \{0 , \dots, N - M\}$. Then $M \leq N - j \leq N$. Define
            \[
                \kappa(j) := \lfloor \log_2 N - C \sqrt{\log_2 N} \rfloor -  \lfloor \log_2 (N-j) - C \sqrt{\log_2 (N-j)} \rfloor \in \Z.
            \]
            For $x \in T^{\kappa(j)}E $, we have $T^k x \in A$ for $k = \lceil \log_2 (N-j) - C \sqrt{\log_2 (N-j)} \rceil , \dots,$ $\lfloor \log_2 (N-j) + C \sqrt{\log_2 (N-j)} \rfloor $ so that
            \begin{align*}
                T_{N-j}A(x) 
                    &= 
                \frac{1}{\sqrt{2\pi \log_2 (N-j)}}  \sum_{k=\lceil \log_2 (N-j) - C \sqrt{\log_2 (N-j)}\rceil }^{\lfloor \log_2 (N-j) + C \sqrt{\log_2 (N-j)}\rfloor } e^{-\frac{1}{2} \big(\frac{k - \log_2 (N-j)}{\sqrt{\log_2 (N-j)}}\big)^2}1_A(T^k x) \\
                    &= 
                \frac{1}{\sqrt{2\pi \log_2 (N-j)}}  \sum_{k= \lceil \log_2 (N-j) - C \sqrt{\log_2 (N-j)}\rceil }^{ \lfloor \log_2 (N-j) + C \sqrt{\log_2 (N-j)} \rfloor } e^{-\frac{1}{2} \big(\frac{k - \log_2 (N-j)}{\sqrt{\log_2 (N-j)}}\big)^2}\\
                    & \geq
                1 - \epsilon,
            \end{align*}
            where the last inequality holds since $N-j \geq M \geq N_0$. Then for all $j \in \{0, \dots, N - M\}$, each $x \in T^{\kappa(j)}E$ is such that 
            \[
                \sup_{M \leq k \leq N} T_k A(x) \geq 1 - \epsilon. 
            \]
            Define $B_N := \bigcup_{j = 0}^{N-M} T^{\kappa(j)} E$. 
            Since $0 \leq \kappa(j) \leq \kappa(N-M)$ are all integer valued, $B_N$ is the union of only $\kappa(N-M)$ many disjoint sets. Then, using the lower bound from Equation \eqref{eqn: Measure of E estimates},
            \begin{align*}
                \mu\Big\{ \sup_{M \leq k \leq N} T_k A(x) \geq 1 - \epsilon \Big\}
                    \geq 
                \mu(B_N) 
                    \geq
                \frac{\kappa(N-M) (1- \epsilon)}{\lfloor \log_2 N + C \sqrt{\log_2 N}\rfloor }.
            \end{align*}
            Hence letting $N \to \infty$,
            \[
                \mu\Big\{ \sup_{M \leq k} T_k A(x) \geq 1 - \epsilon \Big\} \geq 1 - \epsilon.
            \]
        Now, take $N \geq N_0$ large such that $\mu(A_N) \leq \epsilon$ and set $A = A_N$. Then $A$ satisfies the hypothesis of \cref{lem: Banach}, and we obtain a $G_\delta$ subset $\mathcal{R} \subset \mathcal{B}$ such that for $A \in \mathcal{R}$ and a.e. $x \in X$, 
        \[
            \limsup_{N \to \infty} T_N A(x) = 1
            \qquad \text{ and } \quad
            \liminf_{N \to \infty} T_N A(x) = 0.
        \]
        \cref{eqn: limsup estimates for Tn,eqn: liminf estimates for Tn} then yield
        \[
            \limsup_{N \to \infty } T_N ' A (x) = 1 \text{ for a.e. } x \in X
        \]
        and 
        \[
            \liminf_{N \to \infty } T_N ' A (x) \leq \epsilon \text{ for a.e. } x \in X,
        \]
        for all $\epsilon > 0$.  
        \end{proof}

\section{Independence of the Erd\H{o}s-Kac Theorem and \cref{thm: Bergelson-Richter Theorem}}
\label{sec: EK + Thm A}
        
    \subsection{A logarithmic version of Prime Number Theorem}
    \label{subsec: Weak PNT}
        Let \( B \subset \N \) be a finite, non-empty subset of the integers. For a function \( f: B \to \C \), we define the \textit{Ces\'aro averages of f over B} by
        \[
            \ECC{n}{B} f(n) := \frac{1}{|B|} \sum_{n \in B} f(n)
        \]
        and the \textit{logarithmic averages of f over B} by
        \[
            \Elogg{n}{B} f(n) := \frac{1}{\sum_{n \in B} 1/n} ~ \sum_{n \in B} f(n)/n.
        \]  
        Define $[N] := \{1, 2, \dots, N\}$ and let $\P$ denote the set of primes. For $k \in \N$, let $\P_k$ denote the set of \textit{k-almost primes}, the integers with exactly $k$ prime factors, not necessarily distinct. Our averaging set $B$ is often chosen from the aforementioned sets.
        
        Before moving on to \cref{thm: Generalization of EK + Thm A}, we first present a proof of a logarithmic version of the Prime Number Theorem, as it illustrates a streamlined version of the core ideas that arise in the proof of \cref{thm: Generalization of EK + Thm A}.
        
        \begin{theorem}[Logarithmic Prime Number Theorem]
        \label{thm: Weak PNT}
            Let $\lambda(n)$ denote the Liouville function. Then 
            \[
                \lim_{N \to \infty} \Elog \lambda(n) = 0.
            \]
        \end{theorem}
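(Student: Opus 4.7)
The plan is to exploit the total multiplicativity of $\lambda$ — in particular $\lambda(pn) = -\lambda(n)$ for every prime $p$ — together with the near dilation-invariance of logarithmic averages under small primes. Heuristically, these two facts suggest $\Elog \lambda(n) \approx \Elog \lambda(pn) = -\Elog \lambda(n)$, which forces the limit to vanish. To make this rigorous I compute the auxiliary weighted sum
\[
S := \sum_{m \leq N} \frac{\lambda(m)\, \omega_P(m)}{m}, \qquad \omega_P(m) := \#\{p \leq P : p \mid m\},
\]
in two different ways: once by peeling off a prime factor (using $\lambda(pn) = -\lambda(n)$), and once by replacing $\omega_P(m)$ with its average value $\log\log P$ via a Turán--Kubilius concentration.

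For the first evaluation, the substitution $m = pn$ gives $\sum_{m \leq N,\, p \mid m} \lambda(m)/m = -\tfrac{1}{p}\sum_{n \leq N/p} \lambda(n)/n$ for each prime $p$. Summing over $p \leq P$, swapping the order of summation on the left-hand side, and using the truncation bound $\big|\sum_{N/p < n \leq N} \lambda(n)/n\big| \leq \log p + O(1)$ together with Mertens' theorem $\sum_{p \leq P} 1/p = \log\log P + O(1)$, I obtain
\[
S = -(\log\log P) \sum_{n \leq N} \frac{\lambda(n)}{n} + O(\log P) + O\Big(\Big|\sum_{n \leq N} \lambda(n)/n\Big|\Big).
\]
For the second evaluation, the logarithmic Turán--Kubilius inequality $\sum_{m \leq N}(\omega_P(m) - \log\log P)^2/m \ll \log N \cdot \log\log P$ followed by Cauchy--Schwarz yields
\[
S = (\log\log P) \sum_{m \leq N} \frac{\lambda(m)}{m} + O\big(\log N \sqrt{\log\log P}\big).
\]
Equating the two expressions and choosing $P = N^{1/2}$ gives $\big|\sum_{n \leq N} \lambda(n)/n\big| \ll \log N/\sqrt{\log\log N}$, and dividing by $\log N$ yields $\Elog \lambda(n) \ll 1/\sqrt{\log\log N} \to 0$.

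The main obstacle is calibrating $P$ so that the two error terms — the truncation error $O(\log P)$ from replacing $N/p$ by $N$ in the inner sums, and the Turán--Kubilius error $O(\log N\sqrt{\log\log P})$ — are both dominated by the cancellation in $\log\log P \cdot \sum_{n \leq N} \lambda(n)/n$. Verifying the logarithmic form of Turán--Kubilius is a secondary routine task: expanding $\omega_P(m)^2$ and applying Mertens twice produces the expected cancellation against $(\log\log P)^2 \log N$. Once these two standard estimates are in place, the essential arithmetic cancellation is driven entirely by the sign-flip $\lambda(pn) = -\lambda(n)$, which is exactly the sensitivity-to-dilation mechanism advertised in the introduction.
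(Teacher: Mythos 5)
Your proof is correct, and it runs on the same two arithmetic ingredients as the paper's: complete multiplicativity giving the sign flip $\lambda(pn)=-\lambda(n)$, and a Tur\'an--Kubilius-type concentration statement for the number of small prime divisors. The packaging, however, is genuinely different. The paper fixes a finite set of primes $\P\cap[s]$, with $s$ large but independent of $N$, uses statement (2) of \cref{prop: Cor of Turan-Kubilius} to replace $\Elog \lambda(n)$ by the double average $\Elogg{p}{\P\cap[s]}\,\Elogg{n}{[N/p]}\lambda(pn)$ up to $\epsilon$, flips the sign, and then invokes \cref{lem: Logarithmic Averages Trick} to remove the dependence of the inner range on $p$, arriving at $\Elog\lambda(n)=-\Elog\lambda(n)+O(\epsilon)$; the conclusion is purely qualitative, and the argument is deliberately cast in the double-averaging framework that is reused for \cref{prop: Invariance Property} and hence \cref{thm: Generalization of EK + Thm A}. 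You instead let the set of primes grow with $N$ (taking $P=N^{1/2}$) and evaluate the single bilinear sum $S=\sum_{m\le N}\lambda(m)\omega_P(m)/m$ in two ways: peeling off a prime (the sign flip, the truncation bound $\log p+O(1)$, and Mertens) versus replacing $\omega_P(m)$ by $\log\log P$ via a logarithmic Tur\'an--Kubilius variance bound and Cauchy--Schwarz. The estimates you quote check out: the variance bound $\sum_{m\le N}(\omega_P(m)-\log\log P)^2/m\ll\log N\log\log P$ holds after the main terms cancel, and with $P=N^{1/2}$ the errors $O(\log P)$, $O\big(\big|\sum_{n\le N}\lambda(n)/n\big|\big)$ and $O\big(\log N\sqrt{\log\log P}\big)$ are all dominated once you divide by $\log\log P$, so your route yields the quantitative rate $\Elog\lambda(n)\ll(\log\log N)^{-1/2}$, which the paper's soft argument does not provide. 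What the paper's formulation buys in exchange is modularity: keeping the set of primes fixed and the error bookkeeping in terms of $\Phi(m,n)$ is exactly what allows the same skeleton to be upgraded to the weighted averages of Section 4.2, where the factor $F(\varphi(n))$ is only asymptotically invariant under dilation by primes.
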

    
         One of the main tools that allows us to simplify the argument in the case of logarithmic averages is the following standard trick:  

        \begin{lemma}
        \label{lem: Logarithmic Averages Trick}
            Let $f: \N \to \C$ be a bounded arithmetic function. Then for any $p \in \N$, 
            \[
                \lim_{N \to \infty} \Big| \Elogg{n}{[N/p]} f(n) - \Elog f(n) \Big| = 0. 
            \]
        \end{lemma}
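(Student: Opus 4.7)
The plan is a direct calculation exploiting the fact that the harmonic sum $L_N := \sum_{n=1}^N 1/n$ grows logarithmically, while cutting off at $N/p$ for a fixed $p$ removes only a tail of size $O(1)$. The standard asymptotic $L_N = \log N + \gamma + O(1/N)$ immediately yields two facts that do essentially all the work: first, $L_{\lfloor N/p\rfloor}/L_N \to 1$ as $N \to \infty$, and second, $L_N - L_{\lfloor N/p\rfloor} = \log p + o(1) = O(1)$. These are the entire engine of the proof.

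With these in hand, I would split the logarithmic average over $[N]$ into its restriction to $[\lfloor N/p\rfloor]$ plus a short tail:
\[
\Elog f(n) = \frac{L_{\lfloor N/p\rfloor}}{L_N}\,\Elogg{n}{[\lfloor N/p\rfloor]} f(n) + \frac{1}{L_N}\sum_{\lfloor N/p\rfloor < n \leq N} \frac{f(n)}{n}.
\]
Since $f$ is bounded, the second term is bounded in absolute value by $\|f\|_\infty (L_N - L_{\lfloor N/p\rfloor})/L_N = O(1/\log N)$, which tends to zero. Rearranging gives
\[
\Elog f(n) - \Elogg{n}{[\lfloor N/p\rfloor]} f(n) = \left(\frac{L_{\lfloor N/p\rfloor}}{L_N} - 1\right)\Elogg{n}{[\lfloor N/p\rfloor]} f(n) + o(1),
\]
and since $f$ is bounded and the prefactor $L_{\lfloor N/p\rfloor}/L_N - 1$ tends to zero, the entire right-hand side is $o(1)$.

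There is no real obstacle here; the lemma is essentially the statement that logarithmic averaging is insensitive to multiplicative truncations of the range by any fixed constant. The only mild subtlety is keeping track of whether $[N/p]$ denotes $\{1,\dots,\lfloor N/p\rfloor\}$, but this distinction is absorbed into the $O(1)$ error in $L_N - L_{\lfloor N/p\rfloor}$ and plays no role in the final bound.
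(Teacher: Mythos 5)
Your argument is correct and follows essentially the same route as the paper's proof: split the difference into the renormalization discrepancy on $[\lfloor N/p\rfloor]$ and the tail from $\lfloor N/p\rfloor$ to $N$, bound both using the boundedness of $f$, and conclude via the asymptotic $\sum_{n\le N}1/n = \log N + \gamma + o(1)$, which gives $L_{\lfloor N/p\rfloor}/L_N \to 1$. No gaps; the only cosmetic difference is that the paper collects both error terms into the single bound $2M\bigl(1 - A_{\lfloor N/p\rfloor}/A_N\bigr)$ before invoking the same asymptotic.
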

        
        Intuitively, this is due to the weight of $1/n$ that logarithmic averages place on each term. As $N$ becomes large, the terms between $N/p$ and $N$ are weighted so heavily that they contribute very little to the overall average.
        
        \begin{proof}
            Let $M > 0$ be a bound for $|f|$. For $N \in \N$, define $A_N := \sum_{n=1}^N 1/n$. We calculate:
            \begin{align*}
                \Big| \Elog f(n) - \Elogg{n}{[N/p]} f(n) \Big| 
                &\leq 
                \Big( \frac{1}{A_{\lfloor N/p \rfloor}} - \frac{1}{A_N} \Big) \sum_{n=1}^{\lfloor N/p \rfloor} \frac{|f(n)|}{n} + \frac{1}{A_N} \sum_{n=\lfloor N/p \rfloor}^{N} \frac{|f(n)|}{n} 
                \\
                &\leq 
                M \Big[ \Big( 1- \frac{A_{\lfloor N/p \rfloor}}{A_N} \Big) + \frac{1}{A_N} \sum_{n=\lfloor N/p \rfloor }^{N} \frac{1}{n} \Big]
                \\
                &= 
                2 M \Big( 1- \frac{A_{\lfloor N/p \rfloor}}{A_N} \Big).
            \end{align*}
            We now use the fact that
            \[
                \lim_{N \to \infty} \Big| \log N - A_N \Big|
                = 
                \gamma,
            \]
            where \( \gamma \) denotes the Euler-Mascheroni constant. Then
            \begin{align*}
                 \lim_{N \to \infty} \frac{A_{\lfloor N/p \rfloor }}{A_N} 
                 = 
                 \lim_{N \to \infty} \frac{\log \lfloor N/p \rfloor }{\log N} 
                 = 
                 \lim_{N \to \infty} \Big( 1 - \frac{\log p}{\log N}\Big) 
                 = 
                 1,
            \end{align*}
            so that \( \lim_{N \to \infty} \Big| \Elog f(n) - \Elogg{n}{[N/p]} f(n) \Big| = 0 \).
        \end{proof}
        
        To prove \cref{thm: Weak PNT}, we also need the following proposition. The proof of statement (1) uses only elementary methods and can be found in \cite[Proposition 2.1]{BR2020}. The proof of statement (2) is completely analogous, replacing Ces\'aro averages by logarithmic averages.  
        
        \begin{prop}
        \label{prop: Cor of Turan-Kubilius}
            Let $B \sse \N$ be finite and non-empty. Define $\Phi(n,m) = \gcd (n,m) -1$ and let $1_{m|n}$ take value 1 if $m$ divides $n$ and zero else. Then
            \begin{enumerate}
                \item $\limsup_{N \to \infty} \EC \bigg| \Elogg{m}{B} (1- m 1_{m|n}) \bigg| \leq \bigg( \Elogg{m}{B} \Elogg{n}{B} \Phi(n,m) \bigg)^{1/2}$;
                
                \item $\limsup_{N \to \infty} \Elog \bigg| \Elogg{m}{B} (1- m 1_{m|n}) \bigg| \leq \bigg( \Elogg{m}{B} \Elogg{n}{B} \Phi(n,m) \bigg)^{1/2}$.
            \end{enumerate}
        \end{prop}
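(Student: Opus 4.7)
The plan is to follow the Turán–Kubilius-style Cauchy-Schwarz argument alluded to by the author: bound the first moment of the quantity in absolute value by the square root of the second moment, expand the square as a double average over $B \times B$, and reduce everything to the elementary divisibility density $\ECC{n}{[N]} 1_{d|n} = \lfloor N/d \rfloor / N \to 1/d$ (and its logarithmic analog).

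For (1), I would set $X_n := \Elogg{m}{B}(1 - m\, 1_{m|n})$. Cauchy-Schwarz applied to the probability average $\EC$ gives $\EC |X_n| \leq (\EC X_n^2)^{1/2}$. Expanding $X_n^2$ as $\Elogg{m}{B}\Elogg{m'}{B}(1 - m\, 1_{m|n})(1 - m'\, 1_{m'|n})$ and interchanging with $\EC$ (legitimate because $B$ is finite) yields four terms. Using $\EC 1_{d|n} \to 1/d$ together with the identity $1_{m|n} \cdot 1_{m'|n} = 1_{\operatorname{lcm}(m,m')|n}$ and the relation $mm'/\operatorname{lcm}(m,m') = \gcd(m,m')$, the cross terms collapse to $\limsup_{N \to \infty} \EC X_n^2 = \Elogg{m}{B}\Elogg{n}{B}[\gcd(m,n) - 1]$, giving the desired bound after taking square roots.

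For (2), the same skeleton works with $\Elog$ replacing $\EC$. Cauchy-Schwarz remains valid since $\Elog$ is a probability average on $[N]$. The only ingredient that changes is the density computation: $\Elog 1_{d|n} = A_{\lfloor N/d \rfloor}/(d\, A_N)$, where $A_N := \sum_{n=1}^N 1/n$, and this tends to $1/d$ because $A_{\lfloor N/d \rfloor}/A_N \to 1$ --- precisely the estimate used in the proof of \cref{lem: Logarithmic Averages Trick}. Plugging this into the same four-term expansion produces an identical bound.

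The main obstacle I anticipate is bookkeeping rather than substance: one must justify passing $\limsup_{N \to \infty}$ through the double average $\Elogg{m}{B}\Elogg{n}{B}$ and through the square root. Since $B$ is finite, this is a finite sum, $\limsup$ commutes with it, and no uniformity in $(m, m')$ is required. If one ever wanted to let $B$ grow with $N$, the argument would need an explicit error term of the form $O(d/N)$ (or its logarithmic counterpart) to control the tail, but no such refinement is needed here.
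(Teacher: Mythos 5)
Your argument is correct and is essentially the proof the paper relies on: the paper simply refers to \cite[Proposition 2.1]{BR2020} for statement (1) and declares (2) analogous, and that cited argument is exactly your Cauchy--Schwarz second-moment expansion using $\EC m\,1_{m|n} \to 1$ and $\EC m m'\,1_{\operatorname{lcm}(m,m')|n} \to \gcd(m,m')$. Your computation $\Elog 1_{d|n} = A_{\lfloor N/d\rfloor}/(d\,A_N) \to 1/d$ is precisely the ``completely analogous'' replacement of Ces\`aro by logarithmic densities that the paper intends for (2).
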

        
        In the following, we choose the set $B$ so that the quantity \( \Elogg{m}{B} \Elogg{n}{B} \Phi(n,m) \) is arbitrarily small. Intuitively, this means that two random elements from $B$ have a high chance of being coprime.

        \begin{proof}[Proof of Theorem \ref{thm: Weak PNT}]
            Let $\epsilon > 0$. By definition,
            \begin{align*}
                \Elogg{m}{\P \cap [s]} \Elogg{n}{\P \cap [s]} \Phi(m,n) 
                &= 
                \frac{1}{ \big( \sum_{m \in \P \cap [s]} 1 / m \big) ^2 } \sum_{m,n \in \P \cap [s]} \frac{\Phi(m,n)}{m n} \quad .
            \end{align*}
            Notice that for $m$ and $n$ from $\P \cap [s]$, 
            \[
                \Phi(m,n) = 
                \begin{cases}
                    m-1 & m=n \\
                    0   & m \neq n
                \end{cases}
                \quad .
            \]
            Then 
            \begin{align*}
                \frac{1}{ \big( \sum_{m \in \P \cap [s]} 1 / m \big) ^2 } \sum_{m,n \in \P \cap [s]} \frac{\Phi(m,n)}{m n} 
                &= 
                \frac{1}{ \big( \sum_{m \in \P \cap [s]} 1 / m \big) ^2 } \sum_{m \in \P \cap [s]} \frac{m-1}{m^2}
                \\
                &\leq
                \frac{1}{ \sum_{m \in \P \cap [s]} 1 / m }
                \quad , 
            \end{align*}
            so that
            \[
                \limsup_{s \to \infty} \Elogg{m}{\P \cap [s]} \Elogg{n}{\P \cap [s]} \Phi(m,n) = 0. 
            \]
            Take $s_0 \in \N$ such that for all $s \geq s_0$, \[
            \Elogg{m}{\P \cap [s]} \Elogg{n}{\P \cap [s]} \Phi(m,n) \leq \epsilon^2.
            \]
            Fix $s \geq s_0$. By \cref{prop: Cor of Turan-Kubilius}, 
            \[
                \limsup_{N \to \infty} \Elog \Big| \Elogg{s}{\P \cap [s]} (1-1_{p|n}) \Big| < \epsilon.
            \]
            Then by a direct calculation,
            \begin{align*}
                \Big| \Elog \lambda(n) - \Elogg{p}{\P \cap [s]} \Elogg{n}{[N/p]}  \lambda(p n) \Big| 
                &= 
                \Big| \Elog \lambda(n) - \Elogg{p}{\P \cap [s]} \Elog p 1_{p|n} \lambda(n) \Big| + O(1/\log N)
                \\
                &\leq 
                \Elog \Big| \Elogg{p}{\P \cap [s]}(1-1_{p|n})  \Big| + O(1/\log N)
                \\
                &\leq 
                \epsilon + O(1/\log N).
            \end{align*} 
            Hence 
            \[
                \lim_{N \to \infty} \Big| \Elog \lambda(n) - \Elogg{p}{\P \cap [s]} \Elogg{n}{[N/p]}  \lambda(p n) \Big| \leq \epsilon.
            \]
            Since $\lambda(p n) = - \lambda(n)$ for any prime $p$, this reduces to
            \[
                \lim_{N \to \infty} \Big| \Elog \lambda(n) + \Elogg{p}{\P \cap [s]} \Elogg{n}{[N/p]}  \lambda(n) \Big| \leq \epsilon.
            \]
            Finally, applying \cref{lem: Logarithmic Averages Trick} to $\lambda(n)$, we remove the dependence of the inner logarithmic average on $p$: 
            \[
                \lim_{N \to \infty} \Big| \Elog \lambda(n) + \Elogg{p}{\P \cap [s]} \Elogg{n}{[N]}  \lambda(n) \Big| \leq \epsilon.
            \]
            Letting \( \epsilon \to 0 \), we conclude that \(\lim_{N \to \infty} \Elog \lambda(n) = 0\).
        \end{proof}

    \subsection{Proof of \cref{thm: Generalization of EK + Thm A}}
    \label{subsec: Proof of EK + Thm A}
    
        A more technical version of this argument can be applied to obtain \cref{thm: Generalization of EK + Thm A}. The main difficulty arises in the last two steps, in which we remove the dependence of the inner average on the primes $p$. To get around this, we rely on the following technical proposition:
    
        \begin{prop}
        \label{prop: Two sets Lemma}
            For all \( \epsilon \in (0,1) \) and \( \rho \in (1, 1+\epsilon] \), there exist finite, non-empty sets \( B_1 \), \( B_2 \sse \N\)  with the following properties:
            \begin{enumerate}[i.]
                \item \( B_1 \subset \P \)  and \( B_2 \subset \P_2 \);
                
                \item \( B_1 \) and \( B_2 \) have the same cardinality when restricted to \( \rho \)-adic intervals: \newline \( |B_1 \cap (\rho^j, \rho^{j+1}]| = |B_2 \cap (\rho^j, \rho^{j+1}]| \) for all \( j \in \N \cap \{0\} \);
                
                \item \( \Elogg{m}{B_i} \Elogg{n}{B_i} \Phi(m,n) \leq \epsilon \) for \( i = 1,2\);
                
                \item for any $a: \N \to \C$ with $|a| \leq M$ for some $M > 0$,
                \[
                    \bigg| \Elogg{p}{B_1} \ECC{n}{[N/p]} a(n) - \Elogg{p}{B_2} \ECC{n}{[N/p]} a(n) \bigg| \leq 3M \epsilon. 
                \]
            \end{enumerate}
        \end{prop}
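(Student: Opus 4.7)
The plan is to construct $B_1 \subset \P$ and $B_2 \subset \P_2$ that have identical counts in every $\rho$-adic interval $I_j := (\rho^j, \rho^{j+1}]$, which secures (i) and (ii) directly. Condition (iv), the main new ingredient, will then follow from the fact that both the inner average $f_N(x) := \ECC{n}{[N/x]} a(n)$ and the weight $1/x$ are nearly constant on each $I_j$: for $x, x' \in I_j$ the truncations $\lfloor N/x \rfloor, \lfloor N/x' \rfloor$ differ by a factor at most $\rho \le 1+\epsilon$, giving $|f_N(x) - f_N(x')| \le 2M\epsilon$, while $|1/x - 1/x'| \le \epsilon/\min(x,x')$. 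Selecting a representative $\rho^j \in \overline{I_j}$ and combining these estimates yields $|f_N(p)/p - f_N(\rho^j)/\rho^j| \le 3M\epsilon/\rho^j$ for any $p \in I_j$; summing over $j$ with the matching counts from (ii) and dividing by $\sum_{p \in B_i} 1/p$ produces the bound $3M\epsilon$.

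\textbf{Construction.} By Mertens' theorem, choose $s_0$ large and $s_1 = s_0^{\exp(10/\epsilon)}$ so that $\sum_{p \in \P \cap [s_0, s_1]} 1/p \ge 10/\epsilon$. Let $\mathcal{J} := \{j \ge 0 : I_j \subset [s_0, s_1]\}$, set $B_1 := \P \cap \bigcup_{j \in \mathcal{J}} I_j$ and $c_j := |B_1 \cap I_j|$. Fix an auxiliary threshold $P_0 := 6/\epsilon$ and a multiplicity cap $K := 3$. Build $B_2$ by selecting, for each $j \in \mathcal{J}$, exactly $c_j$ squarefree semiprimes $m = pq \in I_j$ whose prime factors satisfy $p, q \ge P_0$, doing so greedily so that no prime is used as a factor of more than $K$ selected semiprimes. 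Landau's theorem gives $|\P_2 \cap I_j| \asymp c_j \log_2 \rho^j$, and restricting to factors $\ge P_0$ still leaves $\asymp c_j (\log_2 \rho^j - \log_2 P_0)$ admissible semiprimes in $I_j$—vastly more than $c_j$ once $\rho^j$ is sufficiently large—so the selection succeeds.

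\textbf{Verification of (iii).} For $B_1$, distinct primes are coprime, so only the diagonal contributes and $\Elogg{m}{B_1}\Elogg{n}{B_1}\Phi(m,n) \le 1/S_1 \le \epsilon/10$, where $S_1 := \sum_{p \in B_1} 1/p$. For $B_2$ the diagonal likewise contributes $\le 1/S_2$. Organizing the off-diagonal by the shared prime factor $p$ and applying Cauchy-Schwarz using the multiplicity cap $K$ and the lower bound $q \ge P_0$ on the other factor gives
\[
\sum_{m \ne n \in B_2} \frac{\gcd(m,n) - 1}{mn} \le \frac{2K}{P_0}\, S_2 \le \epsilon\, S_2,
\]
and since condition (ii) together with $S_1 \ge 10/\epsilon$ forces $S_2 \ge 2/\epsilon$, we obtain $\Elogg{m}{B_2}\Elogg{n}{B_2}\Phi(m,n) \le (1+\epsilon)/S_2 \le \epsilon$.

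\textbf{Main obstacle.} The delicate part is the greedy construction of $B_2$: after restricting to semiprimes with both factors exceeding $P_0$ and capping each prime's multiplicity at $K$, one must still pack exactly $c_j$ admissible semiprimes into every $I_j$. A counting argument—combining Landau's interval estimate with the observation that the total number of prime ``slots'' needed is $\sim 2|B_2|/K \le 2|B_1|/3$, well below the $\sim s_1/\log s_1$ primes available in $[P_0, s_1]$—shows the greedy scheme terminates successfully for all $j \in \mathcal{J}$, but this is the one step requiring careful tuning of $s_0$, $P_0$, and $K$ in terms of $\epsilon$.
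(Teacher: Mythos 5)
Your proposal sets out to re-prove from scratch what the paper simply imports: the paper's proof of this proposition is a citation, with (i)--(iii) taken from \cite[Lemma 2.2]{BR2020} and (iv) from (the argument of) \cite[Lemma 2.3]{BR2020}. Your derivation of (iv) from the matching-count property (ii) --- exploiting that both $1/x$ and $x \mapsto \ECC{n}{[N/x]} a(n)$ are nearly constant on each interval $(\rho^j,\rho^{j+1}]$ --- is the same mechanism as in BR2020 and is sound in spirit; note, however, that as sketched it does not literally give $3M\epsilon$: the two averages are normalized by the different quantities $\sum_{p\in B_1}1/p$ and $\sum_{m\in B_2}1/m$, which agree only up to a factor $\rho$, and this discrepancy costs an additional $O(M\epsilon)$, so your bookkeeping yields $CM\epsilon$ for some absolute $C>3$. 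That is harmless for every use made of (iv) in the paper, but it is not the stated constant. The verifications of (iii), given your construction, are arithmetically fine.

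The genuine gap is the step you yourself flag: the feasibility of the greedy construction of $B_2$ under the global multiplicity cap $K$. The counting argument you offer (total prime ``slots'' needed $\sim 2|B_2|/K$ versus $\sim s_1/\log s_1$ primes available) is a global count, but the constraint is local: when you come to fill $I_j=(\rho^j,\rho^{j+1}]$ you need $c_j$ admissible semiprimes \emph{inside that interval} whose factors are not yet saturated. A saturated prime $p$ eliminates all semiprimes of $I_j$ divisible by $p$, of which there are roughly $(\rho-1)\rho^j/(p\log(\rho^j/p)) \approx c_j/p$; hence the damage is governed by $\sum 1/p$ over saturated primes, which your argument does not control. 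By the time you reach $I_j$, the number of saturated primes can be comparable to $\pi(\rho^j)$, and if these were concentrated among the small admissible primes their reciprocal sum would be of order $\log\log \rho^j$ --- the same order as the total supply $\asymp c_j(\log\log \rho^j - \log\log P_0)$ that Landau's theorem provides, so ``vastly more than $c_j$ admissible semiprimes remain'' is not automatic. To close this you need a real argument (e.g.\ a Hall-type matching/defect argument, or restricting the admissible factorizations in $I_j$ to $j$-dependent ranges so that each prime can serve only boundedly many intervals, at the price of re-checking the count), or simply to follow the construction of \cite[Lemma 2.2]{BR2020}. As written, the crux of your construction is asserted rather than proved.
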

        
        \begin{proof}
            Statements \( (i) - (iii) \) can be found in \cite[Lemma 2.2]{BR2020}. Statement \( (iv) \) follows from \( (iii) \). The proof for arithmetic functions of modulus 1 can be found in \cite[Lemma 2.3]{BR2020}. The argument for a general bounded arithmetic function is completely analogous.
        \end{proof}

        \begin{prop}
        \label{prop: Invariance Property}
            Define \( \varphi(n) := \frac{\Omega(n) - \log \log N }{\sqrt{\log \log N}}\). Then for any bounded arithmetic function \( a: \N \to \C \),
            \begin{equation}
                \AVG F(\varphi(n)) \, a(\Omega(n)) = \AVG F(\varphi(n)) \, a(\Omega(n)+1)  + o(1). 
            \end{equation}
        \end{prop}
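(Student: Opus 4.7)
The plan is to relate both sides of the claimed identity to a common intermediate quantity, namely a dilated Ces\'aro average of $F(\varphi(n)) \, a(\Omega(n)+2)$, by exploiting two carefully chosen sets of small prime-power multipliers. Given $\epsilon > 0$, \cref{prop: Two sets Lemma} supplies finite sets $B_1 \sse \P$ and $B_2 \sse \P_2$ satisfying properties (i)--(iv). The workhorse will be the Tur\'an--Kubilius-style dilation identity, valid for any bounded $f: \N \to \C$ and any $B \in \{B_1, B_2\}$:
\[
    \AVG f(n) = \Elogg{p}{B} \ECC{n}{[N/p]} f(pn) + O\!\left(\|f\|_\infty \sqrt{\epsilon}\right) + o(1) \qquad (N \to \infty).
\]
This will follow from \cref{prop: Cor of Turan-Kubilius}(1), which via property (iii) yields $\limsup_{N \to \infty} \AVG \bigl| \Elogg{p}{B}(1 - p \cdot 1_{p|n})\bigr| \leq \sqrt{\epsilon}$, combined with the elementary identity $\AVG f(n) \cdot p \cdot 1_{p|n} = \ECC{n}{[N/p]} f(pn) + O(\|f\|_\infty / N)$, valid uniformly in $p \in B$ since $B$ is a fixed finite set.

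I would then apply this identity to two different choices of $f$. First, with $f(n) = F(\varphi(n)) \, a(\Omega(n))$ and $B = B_2$: every $q \in B_2 \sse \P_2$ has $\Omega(q) = 2$, so $\Omega(qn) = \Omega(n) + 2$, and
\[
    \varphi(qn) - \varphi(n) = \frac{2}{\sqrt{\log\log N}} \longrightarrow 0
\]
uniformly in $n$ and in $q \in B_2$. Since $F \in C_c(\R)$ is uniformly continuous, this yields
\[
    \AVG F(\varphi(n)) \, a(\Omega(n)) = \Elogg{q}{B_2} \ECC{n}{[N/q]} F(\varphi(n)) \, a(\Omega(n)+2) + O(\sqrt{\epsilon}) + o(1).
\]
Second, with $f(n) = F(\varphi(n)) \, a(\Omega(n)+1)$ and $B = B_1$: every $p \in B_1$ is prime, so $\Omega(pn) = \Omega(n) + 1$, and the analogous argument gives
\[
    \AVG F(\varphi(n)) \, a(\Omega(n)+1) = \Elogg{p}{B_1} \ECC{n}{[N/p]} F(\varphi(n)) \, a(\Omega(n)+2) + O(\sqrt{\epsilon}) + o(1).
\]
The crucial observation is that both displays have landed on dilated averages of the same bounded arithmetic function $c(n) := F(\varphi(n)) \, a(\Omega(n)+2)$, which is bounded uniformly in $N$ by $\|F\|_\infty \|a\|_\infty$. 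Applying property (iv) of \cref{prop: Two sets Lemma} to $c$ shows that the two right-hand sides agree up to $3 \|F\|_\infty \|a\|_\infty \, \epsilon$. Subtracting the two displays, letting $N \to \infty$, and then letting $\epsilon \to 0$ gives the claim.

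The main obstacle I foresee is verifying the uniformity behind the continuity step: the error in replacing $F(\varphi(pn))$ by $F(\varphi(n))$ inside the inner Ces\'aro average must be $o(1)$ uniformly over $n \leq N/p$ and $p \in B_1 \cup B_2$. This is a genuine concern because $\varphi$ itself depends on $N$, but the difficulty is resolved by three facts: $F$ is uniformly continuous on $\R$ (being continuous with compact support), $\Omega(p) \leq 2$ throughout $B_1 \cup B_2$ (so $|\varphi(pn) - \varphi(n)| \leq 2/\sqrt{\log\log N}$), and $\log\log N \to \infty$. Everything else is careful bookkeeping of the error terms.
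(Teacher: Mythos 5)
Your proposal is correct and follows essentially the same route as the paper's proof: pass to dilated averages over the sets $B_1 \subset \P$ and $B_2 \subset \P_2$ from \cref{prop: Two sets Lemma} via \cref{prop: Cor of Turan-Kubilius}, use $\Omega(pn)=\Omega(n)+\Omega(p)$ to land both sides on $a(\Omega(n)+2)$, replace $F(\varphi(pn))$ by $F(\varphi(n))$ using uniform continuity of $F$, and compare the two logarithmic averages with property (iv). The only differences are cosmetic (your $\sqrt{\epsilon}$ error bookkeeping and your uniform-in-$p$ treatment of the continuity step, which is if anything slightly cleaner than the paper's fixed-$p$ argument).
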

        
        \begin{proof}
            Let $\epsilon \in (0,1)$ and $\rho \in [1, 1+ \epsilon)$. Let $B_1$ and $B_2$ be finite, non-empty sets satisfying the conditions of \cref{prop: Two sets Lemma}. Let $M_1$ be a bound for $|F|$ and $M_2$ a bound for $|a|$. Then
            \begin{align*}
                \Big| \EC F(\varphi(n)) \, a(\Omega(n)+1) - & \Elogg{p}{B_1} \ECC{n}{[N/p]} F(\varphi(p n)) \, a(\Omega(p n)+1)  \Big| 
                \\
                &\quad \quad \leq 
                \EC M_1 M_2 | \Elogg{p}{B_1} (1 - p 1_{p|n})| + O(1/N) 
                \\
                &\quad \quad \leq 
                M_1 M_2 \epsilon + O(1/N), 
            \end{align*}
            where the last inequality follows by \cref{prop: Cor of Turan-Kubilius}. Hence 
            \begin{equation}
            \label{eqn: 7}
                \EC F(\varphi(n)) \, a(\Omega(n)+1) 
                = 
                \Elogg{p}{B_1} \ECC{n}{[N/p]} F(\varphi(p n)) \, a(\Omega(p n)+1)  + O(\epsilon + 1/N).
            \end{equation}
            
            Replacing $B_1$ by $B_2$ in the above argument, we obtain
            \begin{equation}
            \label{eqn: 8} 
                \EC F(\varphi(n)) \, a(\Omega(n)) 
                = 
                \Elogg{p}{B_2} \ECC{n}{[N/p]} F(\varphi(p n)) \, a(\Omega(p n))  + O( \epsilon + 1/N).
            \end{equation}
            
            Since $B_1$ consists only of primes and $B_2$ consists only of 2-almost primes, Equations \eqref{eqn: 7} and \eqref{eqn: 8} yield
            \begin{equation}
            \label{eqn: 9}
                \EC F(\varphi(n)) \, a(\Omega(n)+1) 
                = 
                \Elogg{p}{B_1} \ECC{n}{[N/p]} F(\varphi(p n)) a(\Omega(n)+2)  + O(\epsilon + 1/N)
            \end{equation}
            and
            \begin{equation}
            \label{eqn: 10}
                \EC F(\varphi(n)) \, a(\Omega(n)) 
                = 
                \Elogg{p}{B_2} \ECC{n}{[N/p]} F(\varphi(p n))\, a(\Omega(n)+2)  + O( \epsilon + 1/N),
            \end{equation}
            respectively. Since $\lim_{N \to \infty} |F(\varphi(p n)) - F(\varphi(n))| = 0$ for any fixed $p \in \N$, let $N_0$ be such that for $N \geq N_0$, $|F(\varphi(p n)) - F(\varphi(n))| \leq \epsilon$ for all $p \in B_1, B_2$. Then for $i = 1,2$,
            \begin{align*}
                |\Elogg{p}{B_i} \ECC{n}{[N/p]} F(\varphi(p n)) & a(\Omega(n)+2) - \Elogg{p}{B_i} \ECC{n}{[N/p]} F(\varphi(n)) a(\Omega(n)+2)|
                \\
                &\leq 
                \Elogg{p}{B_i}\ECC{n}{[N/p]}M_2 |F(\varphi(p n)) - F(\varphi(n))|
                \\
                &\leq 
                \frac{p M_2}{N} \Elogg{p}{B_i} \sum_{n=1}^{N_0} |F(\varphi(p n)) - F(\varphi(n))| + \Elogg{p}{B_i} p M_2 \epsilon
                \\
                &\leq 
                \frac{C_1}{N} + C_2 \epsilon,  
            \end{align*}
            where $C_1$ and $C_2$ do not depend on $N$ or $\epsilon$. Hence we can remove the dependence on $p$ from the summands of Equations \eqref{eqn: 9} and \eqref{eqn: 10}, yielding
            \begin{equation}
            \label{eqn: 11}
                \EC F(\varphi(n)) \, a(\Omega(n)+1) = \Elogg{p}{B_1} \ECC{n}{[N/p]} F(\varphi(n)) a(\Omega(n)+2)  + O(\epsilon + 1/N) 
            \end{equation}
            and
            \begin{equation}
            \label{eqn: 12}
                \EC F(\varphi(n)) \, a(\Omega(n)) = \Elogg{p}{B_2} \ECC{n}{[N/p]} F(\varphi(n))\, a(\Omega(n)+2)  + O( \epsilon + 1/N).
            \end{equation}
            Finally, \cref{prop: Two sets Lemma} yields 
            \begin{equation}
            \label{eqn: 13}
                \Elogg{p}{B_1} \ECC{n}{[N/p]} F(\varphi(n)) a(\Omega(n)+2) 
                = 
                \Elogg{p}{B_2} \ECC{n}{[N/p]} F(\varphi(n))\, a(\Omega(n)+2) + O(\epsilon).
            \end{equation}
                
            Combining Equations \eqref{eqn: 11}, \eqref{eqn: 12}, and \eqref{eqn: 13} and letting $N \to \infty$, $\epsilon \to 0$, we are done.
        \end{proof}

        \begin{proof}[Proof of \cref{thm: Generalization of EK + Thm A}]
            Fix $F \in C_c(\R)$ and $x \in X$. We first perform a reduction using the condition of unique ergodicity. Set $\varphi(n) = \frac{\Omega(n) - \log\log N}{\sqrt{\log\log N}}$. For $N \in \N$, define the measure $\mu_N$ by:
            \[
                \AVG (F \circ \varphi)(n) \, g(T^{\Omega(n)}x) = \int_X g \, d\mu_N. 
            \]
            Explicitly, $\mu_N = \frac{1}{N} \sum_{n=1}^N (F \circ \varphi)(n) \, \delta_{T^{\Omega(n)}x}$, where $\delta_y$ denotes the point mass at $y$. Now, define 
            \[
                \mu' := \Big( \frac{1}{\sqrt{2\pi}} \int_{-\infty}^\infty F(t) e^{-t^2/2} \, dt \Big) \cdot \mu. 
            \]
            Then the conclusion of the theorem is equivalent to convergence of the sequence  $\{ \mu_N \}_{n \in \N}$ to $\mu'$ in the weak-* topology. Notice that if each limit point of $\{\mu_N\}_{n \in \N}$ is $T$-invariant, then since $\mu$ is uniquely ergodic, each limit point is equal to $\mu'$ and we are done. Hence it remains to show that each limit point is $T$-invariant. To do this, we show that for all $g \in C(X)$,
            \[
                \lim_{N \to \infty} \Big| \int_X g \, d\mu_N - \int_X g \circ T \, d\mu_N  \Big| = 0.
            \]
            By definition of the measures $\mu_N$, we need to show that for all $g \in C(X)$,
            \[
                \lim_{N \to \infty} \Big| \AVG F(\varphi(n)) \, g(T^{\Omega(n)}x) - \AVG F(\varphi(n)) \, g(T^{\Omega(n)+1}x)  \Big| = 0.
            \]
            Fix $g \in C(X)$. By \cref{prop: Invariance Property} applied when $a(n) = g(T^{\Omega(n)}x)$, we are done.  
        \end{proof}

        \begin{proof}[Proof of \cref{thm: Generalization of EK + PNT}]
            Let \( (X, \mu, T) \) be the uniquely ergodic system given by rotation on two points (See \cref{subsec: Number Theory} for the precise definition). Define \( g: X \to \{-1,1\} \) by \( g(0) = -1 \) and \( g(1) = 1 \). Then 
            \[
                \lambda(n) = g(T^{\Omega(n)}(0)). 
            \]
            By \cref{thm: Generalization of EK + Thm A}, combined with the Prime Number Theorem,
            \begin{align*}
                \lim_{N \to \infty} \AVG F \Big( \frac{\Omega(n) - \log \log N }{\sqrt{\log \log N}} \Big) \lambda(n) 
                &=
                \Big( \frac{1}{\sqrt{2\pi}} \int_{-\infty}^\infty F(t) e^{-t^2/2} \, dt \Big)\Big(\lim_{N \to \infty} \sum_{n=1}^{N} \lambda(n) \Big)
                \\
                &= 
                0.
            \end{align*}
        \end{proof}

    \subsection*{Acknowledgements} The author thanks Bryna Kra and Florian Richter for helpful discussions and feedback throughout this project.

\bibliography{MainReferenceLibrary}{}

\begin{thebibliography}{Wey16}
\expandafter\ifx\csname url\endcsname\relax
  \def\url#1{\texttt{#1}}\fi
\expandafter\ifx\csname doi\endcsname\relax
  \def\doi#1{\burlalt{doi:#1}{http://dx.doi.org/#1}}\fi
\expandafter\ifx\csname urlprefix\endcsname\relax\def\urlprefix{URL }\fi
\expandafter\ifx\csname href\endcsname\relax
  \def\href#1#2{#2}\fi
\expandafter\ifx\csname burlalt\endcsname\relax
  \def\burlalt#1#2{\href{#2}{#1}}\fi

\bibitem[BR20]{BR2020}
V.~Bergelson and F.~K. Richter.
\newblock {D}ynamical generalizations of the {P}rime {N}umber {T}heorem and
  disjointness of additive and multiplicative semigroup actions.
\newblock ArXiv Preprints, 2020,
  \burlalt{2002.03498}{http://arxiv.org/abs/2002.03498}.

\bibitem[Del58]{Delange1958}
H.~Delange.
\newblock On some arithmetical functions.
\newblock {\em Illinois J. Math}, 2(1):81--87, 1958.
\newblock \doi{10.1215/ijm/1255380835}.

\bibitem[EK40]{EK1940}
P.~Erdos and M.~Kac.
\newblock {T}he {G}aussian law of errors in the theory of additive number
  theoretic functions.
\newblock {\em American Journal of Mathematics}, 62(1):738--742, 1940.
\newblock \doi{10.2307/2371483}.

\bibitem[Erd46]{Erdos1946}
P.~Erd\H{o}s.
\newblock {O}n the distribution function of additive functions.
\newblock {\em The Annals of Mathematics}, 47(1):1--20, 1946.
\newblock \doi{10.2307/1969031}.

\bibitem[Erd48]{Erdos1948}
P.~Erd\H{o}s.
\newblock {O}n the integers having exactly k prime factors.
\newblock {\em The Annals of Mathematics}, 49(1):53--66, 1948.
\newblock \doi{10.2307/1969113}.

\bibitem[EW11]{EW2011}
M.~Einsiedler and T.~Ward.
\newblock {\em {E}rgodic {T}heory with a view towards number theory}, volume
  259 of {\em Graduate Texts in Mathematics}.
\newblock Springer London, 2011.
\newblock \doi{10.1007/978-0-85729-021-2}.

\bibitem[HR17]{HR1917}
G.~H. {Hardy} and S.~{Ramanujan}.
\newblock The normal number of prime factors of a number \(n\).
\newblock {\em The Quarterly Journal of Pure and Applied Mathematics},
  48:76--92, 1917.
\newblock \urlprefix\url{https://zbmath.org/?format=complete&q=an:46.0262.03}.

\bibitem[Lan53]{Landau1953}
E.~Landau.
\newblock {\em {H}andbuch der {L}ehre von der {V}erteilung der {P}rimzahlen},
  volume~2.
\newblock Chelsea Publishing Co., New York, 2nd edition, 1953.
\newblock \doi{10.2307/3611137}.

\bibitem[Pil40]{Pillai1940}
S.~S. Pillai.
\newblock {G}eneralisation of a theorem of {M}angoldt.
\newblock {\em Proceedings of the Indian Academy of Sciences - Section A},
  11(1):13--20, 1940.
\newblock \doi{10.1007/bf03050544}.

\bibitem[Ric21]{Richter2021}
F.~K. Richter.
\newblock A new elementary proof of the prime number theorem.
\newblock {\em Bulletin of the London Mathematical Society}, 2021.
\newblock \doi{10.1112/blms.12503}.

\bibitem[RW94]{RosenblattWierdl1994}
J.~M. Rosenblatt and M.~Wierdl.
\newblock Pointwise ergodic theorems via harmonic analysis.
\newblock In {\em Proceedings of the Conference on Ergodic Theory and its
  Connections with Harmonic Analysis}, pages 3--151. Cambridge University
  Press, 1994.
\newblock \doi{10.1017/cbo9780511574818.002}.

\bibitem[Sel39]{Selberg1939}
S.~Selberg.
\newblock {Z}ur {T}heorie der quadratfreien {Z}ahlen.
\newblock {\em Mathematische Zeitschrift}, 44(1):306--318, 1939.
\newblock \doi{10.1007/bf01210655}.

\bibitem[vM97]{Mangoldt1897}
H.C.F von Mangoldt.
\newblock Beweis der gleichung \( \sum _{k=1}^\infty \frac{\mu (k)}{k} = 0 \).
\newblock In {\em Sitzungsk berichte der K\"oniglich Preussischen Akademie der
  Wissenschaften zu Berlin}, volume~2, pages 835--852, 1897.
\newblock \urlprefix\url{https://biodiversitylibrary.org/page/29982413 .}

\bibitem[Wal82]{Walters1982}
P.~Walters.
\newblock {\em {A}n {I}ntroduction to {E}rgodic {T}heory}, volume~79 of {\em
  Graduate Texts in Mathematics}.
\newblock Springer-Verlag, New York-Berlin, 1982.

\bibitem[Wey16]{Weyl1916}
H.~Weyl.
\newblock {\"U}ber die {G}leichverteilung von {Z}ahlen mod. {E}ins.
\newblock {\em Mathematische Annalen}, 77(3):313--352, 1916.
\newblock \doi{10.1007/bf01475864}.

\end{thebibliography}
\bibliographystyle{halpha}

\end{document}